\documentclass[11pt]{article}

\usepackage[margin=1in]{geometry}
\usepackage{amsmath,amssymb,amsthm,mathtools}
\usepackage{enumitem}
\usepackage{microtype}
\usepackage{booktabs}
\usepackage[hidelinks]{hyperref}
\usepackage{comment}
\newtheorem{theorem}{Theorem}[section]
\newtheorem{proposition}[theorem]{Proposition}
\newtheorem{lemma}[theorem]{Lemma}
\newtheorem{corollary}[theorem]{Corollary}
\theoremstyle{definition}
\newtheorem{example}[theorem]{Example}
\newtheorem{remark}[theorem]{Remark}
\newcounter{algorithm}

\newcommand{\R}{\mathbb{R}}

\newcommand{\Snp}{\mathbb{S}^{n}_{+}}
\newcommand{\range}{\operatorname{range}}
\newcommand{\rank}{\operatorname{rank}}
\newcommand{\diam}{\operatorname{diam}}
\newcommand{\conv}{\operatorname{conv}}
\newcommand{\argmin}{\operatorname*{argmin}}

\newcommand{\Bcal}{\mathcal{B}}
\newcommand{\Dcal}{\mathcal{D}}
\newcommand{\Wcal}{\mathcal{W}}
\newcommand{\cC}{\mathcal{C}}

\title{Effective curvature dimension in smooth DC optimization}
\author{Fahaar M. Pirani\thanks{ORIE, Cornell University, Ithaca, NY. \texttt{fp268@cornell.edu}.}}
\date{September 22, 2026}

\begin{document}
\maketitle

\begin{abstract}
We introduce the effective curvature dimension of a smooth DC decomposition, defined as the dimension of the span of the Hessian ranges of its first convex component.  We prove that the smallest attainable effective dimension over all smooth DC decompositions equals the minimum rank of a positive semidefinite matrix that uniformly majorizes the
Hessian of the DC objective. Hence, an optimal dimension is always attained by a quadratic convexifier. We further characterize feasible curvature-free subspaces through generalized Schur-complement criteria, revealing an obstruction caused by cross-curvature.As an algorithmic consequence of the effective dimension framework, we show that the number of globally solved lower model subproblems depends on the effective dimension rather than the ambient dimension. Finally, for quadratic first components, we identify the minimum number of tangent supports required for uniform approximation with an internal covering number and obtain a matching dimensional exponent under a nondegeneracy condition.
\end{abstract}

\noindent\textbf{Keywords:} DC optimization, effective dimension, semidefinite majorization, rank minimization, polyhedral approximation, global
optimization. \\

\noindent\textbf{AMS subject classifications:} 90C26, 90C30, 90C22, 65K05.

\section{Introduction}

A Difference of Convex (DC) function admits a representation
\[
    f=g-h,
\]
where $g$ and $h$ are convex. Such a representation is rarely unique, and its
choice can affect an algorithm's performance because DC methods generally treat the two convex components asymmetrically. For instance, convex-concave and DCA-type methods linearize the second component $h$ while keeping $g$ intact. This leads naturally to decomposition criteria that control the curvature assigned to $h$, see \cite{AhmadiHall2018,LeThiPhamDinh2018,LeThiPhamDinh2024}.

This paper studies the complementary design question arising in global
polyhedral methods: \textit{If $g$ is replaced by a maximum of tangent supports while $h$ is kept exact, in how many independent directions does $g$ carry curvature?} Tangent supports are exact along directions on which $g$ is affine. Hence the cost of building a polyhedral lower approximation of $g$ need not be governed by the ambient dimension. A high-dimensional problem may have convex curvature confined to a much smaller subspace.

Let $X\subset\R^n$ be compact and write $\widehat X=\conv X$.  For a
$\cC^2$-smooth convex function $g$, define
\begin{equation}\label{eq:VX}
 V_X(g):=\operatorname{span}\!\left(\bigcup_{z\in\widehat X}
       \range\nabla^2g(z)\right),
 \qquad d_X(g):=\dim V_X(g).
\end{equation}
We call $d_X(g)$ the \emph{effective curvature dimension}.  It records the dimension of the \emph{joint curvature subspace} $V_X(g)$, not the largest pointwise Hessian rank. The Hessian ranges may vary continuously with $z$ and collectively span a larger space than any individual range.

We minimize $d_X(g)$ over all admissible $\cC^2$-smooth DC decompositions of $f$. The first main result turns this infinite-dimensional representation problem into the
semidefinite rank problem
\begin{equation}\label{eq:headline}
 d^\star(f;X)=
 \min\bigl\{\rank B:B\succeq0,\ B-\nabla^2f(z)\succeq0
                     \ \text{for every }z\in\widehat X\bigr\}.
\end{equation}
Thus arbitrary nonlinear convex components cannot improve the smallest attainable dimension beyond a quadratic convexifier $g_B(x)=\tfrac12x^\top Bx$. They may still improve finite accuracy constants, anisotropy, or the numerical behavior
of the resulting subproblems. The rank problem in \eqref{eq:headline} is nonconvex and semi-infinite, so the result is a representation and certification
theorem rather than a general tractability claim.

The second theme is algorithmic. In \cite{PiraniUlus2025}, the authors study an adaptive global DC method that builds a polyhedral lower model of $g$ and solves the corresponding lower-model subproblems globally. Their finite termination
result does not quantify the number of outer refinements. We show that every non-terminating iterate is separated from all previous support points in the projected curvature space.  A packing argument then yields
\[
       K=\mathcal{O}\bigl(\epsilon^{-d_X(g)/2}\bigr).
\]
Here $K$ counts the exact number of lower-model subproblems solved globally. It is not a polynomial-time claim: the cost of an individual global lower-model subproblem may dominate the
overall computation, and inexact subproblem solution is not analyzed.

The exponent is sharp for the associated uniform approximation problem. For a quadratic first component, the optimal number of tangent supports equals an internal covering number of $B^{1/2}X$.  If this transformed set has nonempty
relative interior in $\range B$, the support count is
$\Theta(\epsilon^{-\rank(B)/2})$. This is deliberately distinguished from a pathwise lower bound for the adaptive algorithm, which may terminate before the model is uniformly accurate.

Our results sit at the intersection of several established strands. Outer approximation and covering methods for global DC optimization are classical
\cite{BigiFrangioniZhang2010,BlanqueroCarrizosa2000,FerrerBagirovBeliakov2015,LohneWagner2017,Tuy1995ORL,Tuy1995Handbook}. Decomposition quality and DC dominance have been studied for local DC methods and for polynomial and quadratic decompositions
\cite{AhmadiHall2018,BomzeLocatelli2004}. Quadratic convexification and semidefinite majorization are also well established in deterministic global
optimization \cite{AdjimanEtAl1998,FernandezFritzen2020,SkjalWesterlund2014,SkjalEtAl2012,
StrahlEtAl2025,Zlobec2003,Zlobec2006}.  %Accordingly, our novelty claim is not that covering arguments, quadratic convexifiers, or spectral splits are new in isolation.  
Our contribution provides an exact link
between smooth DC representation, the minimum rank common Hessian majorant,
feasible curvature-free subspaces, and the dimension appearing in a decomposition-sensitive outer iteration bound.

\paragraph{Main contributions.}
The paper is organized around three statements.
\begin{enumerate}[leftmargin=2em]
\item \textbf{Representation.}  The optimal effective curvature dimension is
the minimum rank in \eqref{eq:headline}.  We give an equivalent subspace
formulation, affine invariance, exact generalized Schur-complement conditions,
and lower and upper certificates for structured objectives.
\item \textbf{Algorithmic consequence.}  The adaptive polyhedral method has a non-asymptotic bound on the number of globally solved lower-model subproblems that depends on $d_X(g)$ rather than the ambient dimension $n$. Classical DC dominance improves the resulting dimension, curvature, projected diameter, and fixed-support lower
model in a consistent direction.
\item \textbf{Sharpness and quadratic optimality.}  For quadratic first components, uniform tangent-support complexity is exactly a covering number and has exponent $\rank(B)/2$ under nondegeneracy. For quadratic objectives, the
minimum dimension is the positive inertia and is attained by the usual spectral split.
\end{enumerate}

\section{Smooth DC curvature allocation}

Throughout Sections 2--6, $X\subset\R^n$ is compact and full dimensional, and set $\widehat X=\conv X$. Let $f$ be $\cC^2$-smooth on an open neighborhood of $\widehat X$.

Let $\Dcal^2(f;X)$ be the set of pairs $(g,h)$ that are $\cC^2$-smooth on a neighborhood of $\widehat X$, satisfy $f=g-h$ there, and are convex on $\widehat X$.
Equivalently,
\[
       \nabla^2g(z)\succeq0,\qquad \nabla^2h(z)\succeq0
       \quad(z\in\widehat X).
\]
Full dimensionality avoids repeated affine hull notation and makes the Hessian formulation intrinsic to the stated Euclidean space.

The set $\Dcal^2(f;X)$ is nonempty. Compactness and continuity imply that $\beta I\succeq\nabla^2f(z)$ on $\widehat X$ for all sufficiently large $\beta$. Then $g=\tfrac\beta2\|\cdot\|^2$ and $h=g-f$ form an admissible pair.
Define
\[
       d^\star(f;X):=\min_{(g,h)\in\Dcal^2(f;X)}d_X(g).
\]
The minimum is well defined because the attained dimensions form a nonempty
subset of $\{0,\ldots,n\}$.

For $(g,h)\in\Dcal^2(f;X)$, set
\[
 L_X(g):=\sup_{z\in\widehat X}\lambda_{\max}(\nabla^2g(z)),
 \qquad
 D_X(g):=\diam(P_{V_X(g)}X),
\]
where $P_V$ denotes Euclidean orthogonal projection onto $V$, and define the
Bregman support error
\[
      \beta_g(y,x):=g(y)-g(x)-\langle\nabla g(x),y-x\rangle.
\]
If $d_X(g)=0$, then $g$ is affine on $\widehat X$ and both $L_X(g)$ and $D_X(g)$ are zero. Conversely, because $X$ is full dimensional, $D_X(g)=0$ implies $d_X(g)=0$.  This observation isolates the zero curvature case in all later bounds.

\begin{lemma}[Support-error geometry]\label{lem:support}
For all $x,y\in X$,
\[
      \beta_g(y,x)\le \frac{L_X(g)}2
        \|P_{V_X(g)}(y-x)\|^2.
\]
\end{lemma}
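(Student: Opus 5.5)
The plan is to write the Bregman error through Taylor's formula with integral remainder along the segment from $x$ to $y$. This segment lies in $\widehat X$, because $x,y\in X$ and $\widehat X$ is convex. Write $u=y-x$ and $z_t=x+tu\in\widehat X$ for $t\in[0,1]$. Since $g$ is $\cC^2$ on a neighborhood of $\widehat X$, we have
\[
 \beta_g(y,x)=\int_0^1(1-t)\,u^\top\nabla^2g(z_t)\,u\,dt .
\]
It then suffices to bound the quadratic form $u^\top\nabla^2g(z)u$ pointwise for $z\in\widehat X$ by $L_X(g)\|P_V u\|^2$, where $V=V_X(g)$. Integrating against the weight $(1-t)$, whose integral is $1/2$, gives the stated constant.

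For the pointwise bound, fix $z\in\widehat X$ and set $H=\nabla^2g(z)$. This matrix is symmetric, positive semidefinite by admissibility, and satisfies $\range H\subseteq V$ by the definition \eqref{eq:VX}. Since $H$ is symmetric, $\ker H=(\range H)^\perp\supseteq V^\perp$, so $H$ annihilates $V^\perp$. Decompose $u=P_Vu+w$ with $w\in V^\perp$. Then $Hw=0$, and by symmetry $w^\top H=0$, so
\[
 u^\top Hu=(P_Vu)^\top H(P_Vu)\le\lambda_{\max}(H)\|P_Vu\|^2\le L_X(g)\|P_Vu\|^2 .
\]
Here positive semidefiniteness is not even needed for the upper bound, only symmetry and the definition of $\lambda_{\max}$.

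Two bookkeeping points remain. First, $L_X(g)$ is finite: $\nabla^2g$ is continuous on the compact set $\widehat X$, which is the convex hull of a compact set and hence compact. So the supremum is a finite bound. Second, the degenerate case $d_X(g)=0$ is consistent: both sides are then zero, since $g$ is affine on $\widehat X$.

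I do not expect a genuine obstacle. The only step needing care is the justification that $V^\perp\subseteq\ker\nabla^2g(z)$, which uses the symmetry of the Hessian together with the fact that $V$ contains every pointwise range. The rest is the standard integral remainder.
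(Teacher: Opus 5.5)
Your proposal is correct and follows the paper's route. You use the integral Taylor remainder along $[x,y]\subset\widehat X$, bound the integrand pointwise by $L_X(g)\|P_{V_X(g)}(y-x)\|^2$ because each Hessian annihilates $V_X(g)^\perp$, and integrate the weight $1-t$. Your use of symmetry to get $\ker\nabla^2 g(z)\supseteq V_X(g)^\perp$ just spells out a step the paper states in one line.
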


\begin{proof}
Let $d=y-x$.  Taylor's formula along $[x,y]\subset\widehat X$ gives
\[
 \beta_g(y,x)=\int_0^1(1-t)d^\top\nabla^2g(x+td)d\,dt.
\]
Every Hessian range is contained in $V_X(g)$ and therefore vanishes on
$V_X(g)^\perp$. The integrand is at most
$L_X(g)\|P_{V_X(g)}d\|^2$.  Integration proves the claim.
\end{proof}

\section{Semidefinite and subspace characterizations}

For a $\cC^2$-smooth function $\varphi$, let
\[
 \Bcal(\varphi;X):=
 \{B\in\Snp:B-\nabla^2\varphi(z)\succeq0
                         \text{ for every }z\in\widehat X\}.
\]
Every $B\in\Bcal(f;X)$ gives the quadratic decomposition
\begin{equation}\label{eq:quadratic-decomposition}
   g_B(x)=\frac12x^\top Bx,
   \qquad h_B(x)=g_B(x)-f(x).
\end{equation}

\begin{theorem}[Minimum rank characterization]\label{thm:minrank}
The optimal smooth DC curvature dimension satisfies
\[
          d^\star(f;X)=\min_{B\in\Bcal(f;X)}\rank B.
\]
In particular, an optimal effective curvature dimension is attained by a
quadratic first component.
\end{theorem}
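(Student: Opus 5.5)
The proof establishes two inequalities. The upper bound is direct. The lower bound needs a construction: from an arbitrary admissible pair $(g,h)$ I will build a matrix $B\in\Bcal(f;X)$ with $\rank B\le d_X(g)$.

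\emph{Upper bound.} Let $B\in\Bcal(f;X)$. The quadratic pair $(g_B,h_B)$ of \eqref{eq:quadratic-decomposition} lies in $\Dcal^2(f;X)$. Indeed, $\nabla^2 g_B=B\succeq0$ and $\nabla^2h_B(z)=B-\nabla^2f(z)\succeq0$ on $\widehat X$, and both functions are $\cC^2$ wherever $f$ is. Since $\nabla^2g_B\equiv B$, we get $V_X(g_B)=\range B$, so $d_X(g_B)=\rank B$. Minimizing over $B$ gives $d^\star(f;X)\le\min_{B\in\Bcal(f;X)}\rank B$. The right-hand minimum exists because $\Bcal(f;X)$ is nonempty (it contains $\beta I$) and ranks are integers.

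\emph{Lower bound.} Fix $(g,h)\in\Dcal^2(f;X)$ and set $V=V_X(g)$ with $d=\dim V$. Let $P=P_V$. Each $\nabla^2g(z)$ is symmetric with range in $V$, so $\nabla^2g(z)=P\nabla^2g(z)P$ and $\nabla^2g(z)\preceq L_X(g)P$. Here $L_X(g)<\infty$ by continuity of $\nabla^2g$ on the compact set $\widehat X$. Take $B=L P$. Then $B\succeq0$ and $\rank B=d$, provided $L>0$. In the case $d=0$, take $B=0$ instead. Then
\[
B-\nabla^2f(z)=\bigl(L P-\nabla^2g(z)\bigr)+\nabla^2h(z)\succeq0 \qquad(z\in\widehat X),
\]
because both summands are positive semidefinite. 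Hence $B\in\Bcal(f;X)$ and $\min\rank B\le d_X(g)$. Taking the infimum over admissible pairs gives the reverse inequality.

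The final clause of the theorem follows because a minimizing $B$ yields the quadratic pair $(g_B,h_B)$ attaining $d^\star$. The main point needing care is the lower bound, specifically the claim $\nabla^2g(z)\preceq L_X(g)P_V$. It holds because, for any $u$, we have $u^\top\nabla^2g(z)u=(Pu)^\top\nabla^2g(z)(Pu)\le L_X(g)\|Pu\|^2=u^\top(L_X(g)P)u$. This uses that a symmetric matrix with range in $V$ annihilates $V^\perp$, which is the same fact already invoked in Lemma~\ref{lem:support}.
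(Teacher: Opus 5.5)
Your proposal is correct and is essentially the paper's proof: the quadratic pair $(g_B,h_B)$ gives $d^\star(f;X)\le\min_{B\in\Bcal(f;X)}\rank B$, and for any admissible $(g,h)$ the matrix $L_X(g)P_{V_X(g)}$ lies in $\Bcal(f;X)$ with rank at most $d_X(g)$, which gives the reverse inequality. Your case split is harmless but not needed, since $\rank(L_X(g)P_{V_X(g)})\le d_X(g)$ already suffices.
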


\begin{proof}
Fix an admissible pair $(g,h)$, and abbreviate $V=V_X(g)$ and
$M=L_X(g)$.  Since $\nabla^2g(z)\succeq0$ and
$\range\nabla^2g(z)\subset V$,
\[
       0\preceq\nabla^2g(z)\preceq MP_V.
\]
Moreover, $\nabla^2g(z)-\nabla^2f(z)=\nabla^2h(z)\succeq0$.  Hence
$MP_V\in\Bcal(f;X)$.  If $M=0$, positive semidefiniteness forces every Hessian
of $g$ to vanish and $V=\{0\}$.  Otherwise $\range(MP_V)=V$.  Thus in both cases $\rank(MP_V)=d_X(g)$. This proves that the minimum on the right is at most $d^\star(f;X)$.

Conversely, each $B\in\Bcal(f;X)$ yields the admissible decomposition
\eqref{eq:quadratic-decomposition}, with
$V_X(g_B)=\range B$ and $d_X(g_B)=\rank B$.
\end{proof}

\begin{proposition}[Minimal feasible curvature subspace]\label{prop:subspace}
One has
\[
 d^\star(f;X)=\min\bigl\{\dim V:\exists M\ge0\ \text{such that}\
        MP_V-\nabla^2f(z)\succeq0\ \text{for every }z\in\widehat X\bigr\}.
\]
Whenever $(V,M)$ is feasible,
$g=\tfrac M2\|P_V\cdot\|^2$ and $h=g-f$ give a decomposition whose first
component has curvature confined to $V$.
\end{proposition}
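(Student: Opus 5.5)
The plan is to prove the equality as two inequalities, using Theorem~\ref{thm:minrank} to replace $d^\star(f;X)$ by $\min_{B\in\Bcal(f;X)}\rank B$. Write $m$ for the minimum on the right-hand side of the proposition. It is well defined because $V=\R^n$ with $M=\beta$ large is feasible, as in the nonemptiness argument for $\Dcal^2(f;X)$.

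\emph{Step 1: $m\le d^\star(f;X)$.} Take $B\in\Bcal(f;X)$ of minimal rank and set $V=\range B$ and $M=\lambda_{\max}(B)$. Since $B\succeq0$ has range $V$, we have $B\preceq MP_V$. This is checked by writing $B$ in an eigenbasis adapted to $V$: every nonzero eigenvalue is at most $M$, and $B$ vanishes on $V^\perp$. Hence
\[
  MP_V-\nabla^2 f(z)=(MP_V-B)+(B-\nabla^2f(z))\succeq0,
\]
so $(V,M)$ is feasible with $\dim V=\rank B=d^\star(f;X)$. Alternatively, one can reuse the first half of the proof of Theorem~\ref{thm:minrank}: for an optimal pair $(g,h)$, the matrix $L_X(g)P_{V_X(g)}$ already gives a feasible $(V,M)$ with $\dim V=d_X(g)$.

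\emph{Step 2: $d^\star(f;X)\le m$.} Given feasible $(V,M)$, the matrix $B=MP_V$ is positive semidefinite and lies in $\Bcal(f;X)$. Its rank is $\dim V$ if $M>0$ and $0$ if $M=0$, so in either case $\rank B\le\dim V$. Theorem~\ref{thm:minrank} then gives $d^\star(f;X)\le\dim V$.

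\emph{Step 3: the decomposition claim.} For $g=\tfrac M2\|P_V x\|^2=\tfrac12 x^\top(MP_V)x$, using $P_V^\top P_V=P_V$, we get $\nabla^2 g\equiv MP_V\succeq0$ and $\nabla^2 h=MP_V-\nabla^2 f(z)\succeq0$ on $\widehat X$. So $(g,h)\in\Dcal^2(f;X)$ with $V_X(g)\subseteq V$. This is exactly the quadratic decomposition \eqref{eq:quadratic-decomposition} with $B=MP_V$.

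The only mildly delicate point is the $M=0$ or degenerate case in Step 2, where $\rank(MP_V)$ may be strictly smaller than $\dim V$. This only helps, since we need an inequality in that direction. There is no real obstacle here. The substantive work is already contained in Theorem~\ref{thm:minrank}, and this proposition is essentially a reformulation of it.
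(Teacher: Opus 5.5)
Your proposal is correct and matches the paper's proof. Both directions go through Theorem~\ref{thm:minrank}: a feasible pair gives $MP_V\in\Bcal(f;X)$, and conversely a majorant $B$ gives $B\preceq\lambda_{\max}(B)P_{\range B}$. Your explicit handling of the $M=0$ case and your verification of the decomposition claim fill in details that the paper leaves implicit.
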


\begin{proof}
A feasible pair $(V,M)$ gives $MP_V\in\Bcal(f;X)$. Applying Theorem~\ref{thm:minrank} yields
$d^\star(f;X)\le\dim V$.  Conversely, if $B\in\Bcal(f;X)$ and
$V=\range B$, then $B\preceq\lambda_{\max}(B)P_V$, and thus
$\lambda_{\max}(B)P_V\in\Bcal(f;X)$.  Minimizing over $B$ proves the result.
\end{proof}

\begin{proposition}[Affine invariance]\label{prop:affine}
Let $A\in\R^{n\times n}$ be invertible, $b\in\R^n$,
$Y=A^{-1}(X-b)$, and $F(y)=f(Ay+b)$.  Then
\[
                 d^\star(F;Y)=d^\star(f;X).
\]
\end{proposition}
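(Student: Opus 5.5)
The plan is to work entirely through the rank characterization of Theorem~\ref{thm:minrank} and exhibit a rank-preserving bijection between $\Bcal(f;X)$ and $\Bcal(F;Y)$. Neither $\Dcal^2(f;X)$ nor any specific decomposition needs to be handled directly.

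First, I would record the elementary geometric facts. The map $y\mapsto Ay+b$ is an affine bijection, so it commutes with convex hulls: $\widehat Y=\conv Y=A^{-1}(\widehat X-b)$. The set $Y$ is compact and full dimensional, and $F$ is $\cC^2$ on an open neighborhood of $\widehat Y$, namely the preimage of the neighborhood of $\widehat X$. Hence $d^\star(F;Y)$ is defined under the standing hypotheses. The chain rule then gives
\[
   \nabla^2F(y)=A^\top\nabla^2f(Ay+b)A,
\]
and as $y$ ranges over $\widehat Y$, the point $z=Ay+b$ ranges over exactly $\widehat X$.

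Next, I would define $\Phi(B)=A^\top BA$ and show that $\Phi$ maps $\Bcal(f;X)$ onto $\Bcal(F;Y)$. Congruence by an invertible matrix preserves positive semidefiniteness in both directions. Therefore $B\succeq0$ if and only if $A^\top BA\succeq0$. Likewise, $B-\nabla^2f(z)\succeq0$ for all $z\in\widehat X$ if and only if $A^\top BA-\nabla^2F(y)=A^\top(B-\nabla^2f(Ay+b))A\succeq0$ for all $y\in\widehat Y$. The inverse map is $C\mapsto A^{-\top}CA^{-1}$, so $\Phi$ is a bijection. It preserves rank because $A$ is invertible. Taking minima of $\rank$ over the two sets and invoking Theorem~\ref{thm:minrank} on both sides gives $d^\star(F;Y)=d^\star(f;X)$.

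I expect no serious obstacle. The only point needing care is checking that the hypotheses of Theorem~\ref{thm:minrank} transfer to $(F,Y)$, in particular the identity $\conv Y=A^{-1}(\conv X-b)$ and the smoothness neighborhood. An alternative route would transport decompositions directly via $(g,h)\mapsto(g(A\cdot+b),h(A\cdot+b))$ and use $V_Y(g\circ\text{aff})=A^\top V_X(g)$. That route works but is messier, so I would use the semidefinite characterization.
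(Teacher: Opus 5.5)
Your proposal is correct and follows essentially the paper's proof: both reduce to Theorem~\ref{thm:minrank} and use the rank-preserving congruence $B\mapsto A^\top BA$ coming from $\nabla^2F(y)=A^\top\nabla^2f(Ay+b)A$. The differences are presentational: you make the bijection onto $\Bcal(F;Y)$ and the transfer of the standing hypotheses (compactness, full dimensionality, $\conv Y=A^{-1}(\conv X-b)$, the smoothness neighborhood) explicit, whereas the paper proves one inequality and obtains the reverse by applying the same argument to the inverse transformation.
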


\begin{proof}
Since $\nabla^2F(y)=A^\top\nabla^2f(Ay+b)A$, every
$B\in\Bcal(f;X)$ induces $A^\top BA\in\Bcal(F;Y)$ with unchanged rank. Applying Theorem~\ref{thm:minrank} gives us one inequality. The reverse inequality follows by applying the same argument to the inverse transformation.
\end{proof}

\begin{remark}[Structural rather than computational]
The minimum rank problem in Theorem~\ref{thm:minrank} is nonconvex and contains infinitely many matrix inequalities. No general efficient method for solving it is claimed here. The following results provide exact subspace conditions
and certificates for structured classes.
\end{remark}

\section{Exact characterization of curvature-free subspaces}

Fix a subspace $W\subset\R^n$ and set $V=W^\perp$. Relative to
$\R^n=V\oplus W$, write
\begin{equation}\label{eq:block-hessian}
 \nabla^2f(z)=
 \begin{pmatrix}
     H_{VV}(z)&H_{VW}(z)\\
     H_{WV}(z)&H_{WW}(z)
 \end{pmatrix}.
\end{equation}
We call $W$ a \emph{feasible curvature-free subspace} if some
$B\in\Bcal(f;X)$ satisfies $W\subset\ker B$. Since $B\succeq0$, such a matrix
has the form $B=\operatorname{diag}(B_V,0)$ with $B_V\succeq0$.  Whenever
$-H_{WW}(z)\succeq0$, define
\begin{equation}\label{eq:RW}
 R_W(z):=H_{VV}(z)+H_{VW}(z)(-H_{WW}(z))^\dagger H_{WV}(z),
\end{equation}
where $A^\dagger$ denotes the Moore--Penrose pseudoinverse of $A$. If $V=\{0\}$,
conditions involving $R_W$ are vacuous.

\begin{theorem}[Feasible curvature-free subspaces]\label{thm:schur}
A subspace $W$ is feasible curvature-free if and only if, for every
$z\in\widehat X$,
\begin{align}
       H_{WW}(z)&\preceq0, \label{eq:nonpositive}\\
       \range H_{WV}(z)&\subset\range(-H_{WW}(z)), \label{eq:range}
\end{align}
and
\begin{equation}\label{eq:bounded-R}
       \sup_{z\in\widehat X}\lambda_{\max}(R_W(z))<+\infty.
\end{equation}
When these conditions hold, $B=\alpha P_V$ is feasible for every
\[
       \alpha\ge
       \max\left\{0,\sup_{z\in\widehat X}\lambda_{\max}(R_W(z))\right\}.
\]
More generally, $\operatorname{diag}(B_V,0)\in\Bcal(f;X)$ if and only if
\eqref{eq:nonpositive}--\eqref{eq:range} hold and
$B_V\succeq R_W(z)$ for every $z\in\widehat X$.
\end{theorem}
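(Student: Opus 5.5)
The plan is to prove the general ``more generally'' statement first and derive the rest from it. The key tool is the generalized Schur-complement lemma for block positive semidefinite matrices: a symmetric block matrix $\begin{pmatrix}P&Q\\Q^\top&S\end{pmatrix}$ is positive semidefinite if and only if $S\succeq0$, $\range Q^\top\subset\range S$, and $P-QS^\dagger Q^\top\succeq0$.

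Fix $z\in\widehat X$ and $B=\operatorname{diag}(B_V,0)$ with $B_V\succeq0$. In the decomposition $\R^n=V\oplus W$,
\[
B-\nabla^2f(z)=\begin{pmatrix}B_V-H_{VV}(z)&-H_{VW}(z)\\-H_{WV}(z)&-H_{WW}(z)\end{pmatrix}.
\]
Apply the lemma with $S=-H_{WW}(z)$ and $Q=-H_{VW}(z)$, so that $Q^\top=-H_{WV}(z)$. The condition $S\succeq0$ is exactly \eqref{eq:nonpositive}. Since $\range(-H_{WV})=\range H_{WV}$, the range condition is exactly \eqref{eq:range}. The Schur complement is
\[
B_V-H_{VV}(z)-H_{VW}(z)(-H_{WW}(z))^\dagger H_{WV}(z)=B_V-R_W(z),
\]
because the sign changes cancel in the quadratic term. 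Hence $B-\nabla^2f(z)\succeq0$ holds if and only if \eqref{eq:nonpositive}, \eqref{eq:range} and $B_V\succeq R_W(z)$ hold. Quantifying over $z\in\widehat X$ gives the general statement. The degenerate case $V=\{0\}$ is handled separately: then $B=0$, the conditions on $R_W$ are vacuous, and feasibility reduces to $-\nabla^2 f=-H_{WW}\succeq0$.

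For the main equivalence, recall that $W$ is feasible curvature-free exactly when some $B\in\Bcal(f;X)$ has $W\subset\ker B$. As the paper notes, $B\succeq0$ with $W\subset\ker B$ forces $B=\operatorname{diag}(B_V,0)$ with $B_V\succeq0$. So feasibility means that some $B_V\succeq0$ satisfies the conditions of the general statement.

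\emph{Necessity.} If such a $B_V$ exists, then \eqref{eq:nonpositive} and \eqref{eq:range} hold, and $\lambda_{\max}(R_W(z))\le\lambda_{\max}(B_V)$ for every $z$. This gives \eqref{eq:bounded-R}.

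\emph{Sufficiency.} Take $\alpha$ as stated and set $B_V=\alpha I_V$, so that $B=\alpha P_V$. Then $\alpha\ge0$ gives $B_V\succeq0$. Also $\alpha\ge\lambda_{\max}(R_W(z))$ gives $\alpha I_V\succeq R_W(z)$ for every $z$. The general statement then shows $\alpha P_V\in\Bcal(f;X)$, which proves both the sufficiency direction and the explicit choice of $\alpha$.

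The only step needing real care is the generalized Schur-complement lemma with a singular $S$. I would either cite it (Albert's theorem) or prove it quickly. For necessity: if $S\succeq0$ and $Sw=0$, then positive semidefiniteness of the block matrix restricted to vectors $(tv,w)$ gives $t^2v^\top Pv+2t\,v^\top Qw\ge0$ for all $t$, which forces $Qw=0$. Hence $\ker S\subset\ker Q$, i.e.\ $\range Q^\top\subset\range S$. Under this range condition $SS^\dagger Q^\top=Q^\top$, and the congruence by $\begin{pmatrix}I&0\\-S^\dagger Q^\top&I\end{pmatrix}$ block-diagonalizes the matrix into $\operatorname{diag}(P-QS^\dagger Q^\top,\,S)$, which yields both directions. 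No compactness or continuity is needed beyond the pointwise argument, because \eqref{eq:bounded-R} is stated as an explicit supremum condition.
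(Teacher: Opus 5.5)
Your proposal is correct and follows essentially the same route as the paper: you apply the generalized Schur-complement criterion pointwise to the block form of $B-\nabla^2f(z)$ with $B=\operatorname{diag}(B_V,0)$, then obtain necessity from $\lambda_{\max}(R_W(z))\le\lambda_{\max}(B_V)$ and sufficiency from the choice $B_V=\alpha I_V$. Your sketch of the Schur lemma for singular $S$ and your explicit requirement $B_V\succeq0$ (which the paper's ``more generally'' clause leaves implicit) are correct refinements rather than a different approach.
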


\begin{proof}
Feasibility is equivalent, for every $z$, to
\[
 \begin{pmatrix}
   B_V-H_{VV}(z)&-H_{VW}(z)\\
   -H_{WV}(z)&-H_{WW}(z)
 \end{pmatrix}\succeq0.
\]
The generalized Schur-complement criterion states that a symmetric block matrix
$\left(\begin{smallmatrix}A&E\\E^\top&C\end{smallmatrix}\right)$ is positive semidefinite if and only if $C\succeq0$, $\range E^\top\subset\range C$, and $A-EC^\dagger E^\top\succeq0$. Applying this criterion gives
\eqref{eq:nonpositive}, \eqref{eq:range}, and $B_V\succeq R_W(z)$. A common $B_V$ exists precisely when the family $R_W(z)$ has a finite common upper bound. In finite dimensions this is equivalent to \eqref{eq:bounded-R}, because $R_W(z)\preceq\alpha I_V$ for the displayed choice of $\alpha$.
\end{proof}

\begin{remark}
Condition \eqref{eq:bounded-R} does not follow from compactness alone. Although the Hessian blocks are continuous, the pseudoinverse in \eqref{eq:RW} can become unbounded near points where $\rank(-H_{WW}(z))$ drops, even if the pointwise range condition holds.
\end{remark}

\begin{corollary}[Uniform negativity]\label{cor:uniform-neg}
If $H_{WW}(z)\preceq-\mu I_W$ for all $z\in\widehat X$ and some $\mu>0$, then
$W$ is feasible curvature-free.
\end{corollary}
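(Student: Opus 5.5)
The plan is to check the three conditions of Theorem~\ref{thm:schur} directly; feasibility of $W$ then follows from that theorem. Condition \eqref{eq:nonpositive} is immediate, since $H_{WW}(z)\preceq-\mu I_W\preceq0$. For condition \eqref{eq:range}, we note that $-H_{WW}(z)\succeq\mu I_W$ is positive definite on $W$, hence invertible as an operator on $W$. Its range is therefore all of $W$, and this contains $\range H_{WV}(z)$ for every $z\in\widehat X$.

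The remaining step is the uniform bound \eqref{eq:bounded-R}, which is exactly where the Remark after Theorem~\ref{thm:schur} warns that trouble can occur. Uniform negativity rules out any rank drop. Since $-H_{WW}(z)$ is invertible, the pseudoinverse is the genuine inverse, and $-H_{WW}(z)\succeq\mu I_W$ gives $0\preceq(-H_{WW}(z))^{-1}\preceq\mu^{-1}I_W$. Hence
\[
 R_W(z)\preceq H_{VV}(z)+\mu^{-1}H_{VW}(z)H_{WV}(z),
\]
and therefore
\[
 \lambda_{\max}(R_W(z))\le\|H_{VV}(z)\|+\mu^{-1}\|H_{VW}(z)\|^2.
\]
The Hessian blocks are continuous on the compact set $\widehat X$, because $f$ is $\cC^2$ on a neighborhood of it and $X$ is compact. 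So the right-hand side is bounded by a constant $\sup_z\|\nabla^2 f(z)\|\,(1+\mu^{-1}\sup_z\|\nabla^2f(z)\|)<\infty$, which gives \eqref{eq:bounded-R}.

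With all three conditions verified, Theorem~\ref{thm:schur} yields that $W$ is feasible curvature-free. It also gives the explicit witness $\alpha P_V$ with $\alpha=\max\{0,\sup_z\lambda_{\max}(R_W(z))\}$. The only care needed is the operator-inequality step: $C\succeq\mu I$ implies $C^{-1}\preceq\mu^{-1}I$, and $E C^{-1}E^\top\preceq\mu^{-1}EE^\top$. Both are standard monotonicity facts for positive definite matrices, so I do not expect a real obstacle.
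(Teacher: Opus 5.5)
Your proposal is correct and follows the paper's proof: you verify the conditions of Theorem~\ref{thm:schur}, obtaining the range condition from invertibility of $-H_{WW}(z)$ on $W$ and the bound on $R_W$ from $\|(-H_{WW}(z))^{-1}\|_{\mathrm{op}}\le 1/\mu$ together with continuity and compactness. You also make the constant explicit, and your bound $R+R^2/\mu$ with $R=\sup_{z}\|\nabla^2 f(z)\|_{\mathrm{op}}$ coincides with the one in Theorem~\ref{thm:quantitative}.
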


\begin{proof}
The range condition is straightforward and
$\|(-H_{WW}(z))^{-1}\|_{\mathrm{op}}\le1/\mu$.  Continuity of the Hessian blocks
and compactness of $\widehat X$ then imply a uniform upper bound for $R_W(z)$. Applying Theorem~\ref{thm:schur} gives the desired result.
\end{proof}

\begin{corollary}[Vanishing cross-curvature]\label{cor:crosszero}
If $H_{VW}(z)=0$ and $H_{WW}(z)\preceq0$ for all $z\in\widehat X$, then $W$ is
feasible curvature-free.
\end{corollary}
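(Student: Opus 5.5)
The plan is to check the three conditions of Theorem~\ref{thm:schur} directly for the block decomposition \eqref{eq:block-hessian}. Since $\nabla^2 f(z)$ is symmetric, $H_{WV}(z)=H_{VW}(z)^\top$, so the hypothesis $H_{VW}(z)=0$ forces $H_{WV}(z)=0$ for every $z\in\widehat X$.

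The argument then runs through the conditions in order.

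\begin{itemize}
\item Condition \eqref{eq:nonpositive} is exactly the hypothesis $H_{WW}(z)\preceq0$.
\item Condition \eqref{eq:range} holds trivially, because $\range H_{WV}(z)=\{0\}$, which is contained in any subspace, in particular in $\range(-H_{WW}(z))$.
\item For \eqref{eq:bounded-R}, the cross term in \eqref{eq:RW} vanishes identically. Indeed, $H_{VW}(z)(-H_{WW}(z))^\dagger H_{WV}(z)=0$ no matter how large the pseudoinverse is, so $R_W(z)=H_{VV}(z)$.
\end{itemize}

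The map $z\mapsto H_{VV}(z)=P_V\nabla^2f(z)|_V$ is continuous on the compact set $\widehat X$, because $f$ is $\cC^2$ on a neighborhood of $\widehat X$. Since $\lambda_{\max}$ is continuous, $\sup_{z}\lambda_{\max}(H_{VV}(z))$ is finite. If $V=\{0\}$, the conditions involving $R_W$ are vacuous by convention.

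Theorem~\ref{thm:schur} then gives feasibility. It also supplies the explicit certificate $B=\alpha P_V$ with $\alpha\ge\max\{0,\sup_z\lambda_{\max}(H_{VV}(z))\}$.

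There is no real obstacle. The only point needing care is that the possible unboundedness of the pseudoinverse, flagged in the remark after Theorem~\ref{thm:schur}, is harmless here, because it is multiplied by zero blocks. This is the contrast with Corollary~\ref{cor:uniform-neg}, where a uniform negativity bound was needed precisely to control that pseudoinverse.
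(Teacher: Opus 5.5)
Your proposal is correct and follows the paper's own proof: you check the three conditions of Theorem~\ref{thm:schur}, note that the range condition is trivial because the cross block is zero (so $R_W(z)=H_{VV}(z)$ regardless of the pseudoinverse), and obtain the uniform upper bound from continuity of the Hessian on the compact set $\widehat X$. Your added remarks on symmetry giving $H_{WV}=H_{VW}^\top=0$ and on the explicit certificate $\alpha P_V$ are accurate.
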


\begin{proof}
The range condition is trivial and $R_W(z)=H_{VV}(z)$, which is uniformly
bounded above by continuity and compactness.
\end{proof}

Let $\Wcal(f;X)$ be the nonempty family of subspaces satisfying
\eqref{eq:nonpositive}--\eqref{eq:bounded-R}.  It is nonempty because
$W=\{0\}$ is always admissible.

\begin{corollary}[Exact kernel formula]\label{cor:kernel}
\[
             d^\star(f;X)=n-\max_{W\in\Wcal(f;X)}\dim W.
\]
\end{corollary}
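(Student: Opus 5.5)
The plan is to prove the two inequalities separately, using Theorem~\ref{thm:minrank} to reduce $d^\star(f;X)$ to a minimum rank over $\Bcal(f;X)$, and Theorem~\ref{thm:schur} to translate feasibility of a kernel subspace into membership in $\Wcal(f;X)$. The one fact needed throughout is that for $B\succeq0$, $\ker B=(\range B)^\perp$, so $\rank B=n-\dim\ker B$.

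\textbf{Step 1 ($\le$).} Fix $W\in\Wcal(f;X)$ and set $V=W^\perp$. Since $W$ satisfies \eqref{eq:nonpositive}--\eqref{eq:bounded-R}, Theorem~\ref{thm:schur} shows that $B=\alpha P_V\in\Bcal(f;X)$ for $\alpha=\max\{0,\sup_z\lambda_{\max}(R_W(z))\}$. Its rank is at most $\dim V=n-\dim W$. Here we need $\rank(\alpha P_V)\le\dim V$, not equality; when $\alpha=0$ the rank is $0$, which is still fine. Theorem~\ref{thm:minrank} then gives $d^\star(f;X)\le n-\dim W$. Maximizing over $W$ yields $d^\star\le n-\max_{W}\dim W$.

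\textbf{Step 2 ($\ge$).} Take a minimizer $B\in\Bcal(f;X)$ with $\rank B=d^\star(f;X)$, which exists by Theorem~\ref{thm:minrank}, and set $W=\ker B$. Then $W\subset\ker B$ trivially, so $W$ is feasible curvature-free by definition. The ``only if'' direction of Theorem~\ref{thm:schur} gives that $W$ satisfies \eqref{eq:nonpositive}--\eqref{eq:bounded-R}, so $W\in\Wcal(f;X)$. Then
\[
\max_{W'\in\Wcal(f;X)}\dim W'\ge\dim\ker B=n-\rank B=n-d^\star(f;X).
\]

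The maximum is attained because dimensions lie in $\{0,\dots,n\}$ and $\Wcal(f;X)\ni\{0\}$ is nonempty. The only point requiring care is the direction of Theorem~\ref{thm:schur} used in Step~2. It applies to a $B$ of the form $\operatorname{diag}(B_V,0)$ relative to $V=W^\perp$, and since $W=\ker B$ and $B$ is symmetric, $B$ indeed has this block form. I therefore expect no real obstacle; the corollary is essentially bookkeeping combining Theorems~\ref{thm:minrank} and~\ref{thm:schur}.
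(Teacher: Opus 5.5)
Your proposal is correct and matches the paper's proof: both directions use Theorem~\ref{thm:schur}. Its ``only if'' part gives $\ker B\in\Wcal(f;X)$ for a feasible $B$, and the $\alpha P_V$ construction gives a feasible majorant with kernel containing any $W\in\Wcal(f;X)$; Theorem~\ref{thm:minrank} then concludes. Your checks of the block form of $B$ and the case $\alpha=0$ only make explicit details the paper leaves implicit.
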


\begin{proof}
If $B\in\Bcal(f;X)$, Theorem~\ref{thm:schur} gives
$\ker B\in\Wcal(f;X)$ and $\rank B=n-\dim\ker B$.  Conversely,
Theorem~\ref{thm:schur} constructs a feasible majorant with any
$W\in\Wcal(f;X)$ in its kernel. Applying Theorem~\ref{thm:minrank} gives the desired result.
\end{proof}

\begin{example}[Cross-curvature obstruction]\label{ex:cross}
Let $f(x,y)=xy$ and $W=\operatorname{span}\{e_2\}$.  Then $H_{WW}=0$, so the
Hessian quadratic form is nonpositive on $W$.  However, $H_{WV}=1$, and
\[
  \range H_{WV}=\R\not\subset\{0\}=\range(-H_{WW}).
\]
Thus \eqref{eq:range} fails.  Equivalently, every positive semidefinite $B$ with
$W\subset\ker B$ has the form $\left(\begin{smallmatrix}b&0\\0&0\end{smallmatrix}\right)$,
while $B-\nabla^2f$ has determinant $-1$.
\end{example}

\section{Geometry and structured objectives}

Define the largest common nonpositive subspace dimension
\[
 m_-(f;X):=\max\bigl\{\dim W:w^\top\nabla^2f(z)w\le0
      \ \text{for all }w\in W,\ z\in\widehat X\bigr\}.
\]
Let $n_+(Q)$ denote the number of positive eigenvalues of a symmetric matrix
$Q$, counted with multiplicity.

\begin{proposition}[Joint and pointwise lower bounds]\label{prop:lower}
\[
 d^\star(f;X)\ge n-m_-(f;X)
        \ge \sup_{z\in\widehat X}n_+(\nabla^2f(z)).
\]
\end{proposition}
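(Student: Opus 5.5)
The two inequalities are independent. The first comes from the kernel formula. The second comes from a dimension count through Courant--Fischer, or equivalently from the fact that a nonpositive subspace and a positive subspace intersect trivially.

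\emph{First inequality.} By Corollary~\ref{cor:kernel}, $d^\star(f;X)=n-\dim W$ for some $W\in\Wcal(f;X)$. So it suffices to show that every $W\in\Wcal(f;X)$ is a common nonpositive subspace, since then $\dim W\le m_-(f;X)$. Condition \eqref{eq:nonpositive} gives $H_{WW}(z)\preceq0$. This says exactly that $w^\top\nabla^2f(z)w\le0$ for $w\in W$, because the $WW$ block of \eqref{eq:block-hessian} is the compression of $\nabla^2f(z)$ to $W$.

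A direct route avoids the Schur machinery altogether. Take an optimal $B\in\Bcal(f;X)$ from Theorem~\ref{thm:minrank}. For $w\in\ker B$ we have $w^\top\nabla^2f(z)w\le w^\top Bw=0$. Hence $\ker B$ is a common nonpositive subspace of dimension $n-\rank B$, and so $m_-(f;X)\ge n-d^\star(f;X)$. I would present this direct version because it is cleaner.

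\emph{Second inequality.} Fix $z\in\widehat X$ and a common nonpositive subspace $W$ with $\dim W=m_-(f;X)$. Let $U$ be the span of the eigenvectors of $\nabla^2f(z)$ with positive eigenvalues, so $\dim U=n_+(\nabla^2f(z))$. On $U\setminus\{0\}$ the quadratic form is strictly positive, while on $W$ it is nonpositive, so $U\cap W=\{0\}$. Therefore $\dim U+\dim W\le n$, which gives $n_+(\nabla^2f(z))\le n-m_-(f;X)$. Taking the supremum over $z$ finishes the proof.

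Neither step is a serious obstacle. The only point needing care is that $m_-$ is defined as a maximum over a nonempty family: $W=\{0\}$ qualifies, and dimensions are bounded by $n$, so the maximum exists. One should also use strict positivity on $U$, not merely nonnegativity, to obtain the trivial intersection.
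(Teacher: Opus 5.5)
Your proposal is correct and follows the paper's proof: the paper also shows that $\ker B$ is a common nonpositive subspace for every $B\in\Bcal(f;X)$ (your ``direct version''), concludes with Theorem~\ref{thm:minrank}, and bounds the dimension of a nonpositive subspace at a fixed $z$ by $n-n_+(\nabla^2f(z))$. The paper states that bound without proof, and your trivial-intersection argument with the positive eigenspace supplies it; your alternative route through Corollary~\ref{cor:kernel} is also valid but not needed.
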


\begin{proof}
If $B\in\Bcal(f;X)$ and $w\in\ker B$, then
$w^\top\nabla^2f(z)w\le0$ for every $z$. Hence
$$\dim\ker B\le m_-(f;X) \text{ and } \rank B\ge n-m_-(f;X).$$  At a fixed $z$, a
subspace on which $\nabla^2f(z)$ is nonpositive has dimension at most $n-n_+(\nabla^2f(z))$.  Applying Theorem~\ref{thm:minrank} gives the desired result.
\end{proof}

\begin{theorem}[A quantitative sufficient condition]\label{thm:quantitative}
Suppose a subspace $W$ and $\mu>0$ satisfy
\[
 w^\top\nabla^2f(z)w\le-\mu\|w\|^2
       \quad(w\in W,\ z\in\widehat X).
\]
Let $V=W^\perp$ and
$R=\sup_{z\in\widehat X}\|\nabla^2f(z)\|_{\mathrm{op}} < \infty$.  Then
\[
       \left(R+\frac{R^2}{\mu}\right)P_V\in\Bcal(f;X),
       \qquad d^\star(f;X)\le n-\dim W.
\]
\end{theorem}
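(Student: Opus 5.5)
We plan to verify directly that $B=\alpha P_V$ with $\alpha=R+R^2/\mu$ satisfies $B-\nabla^2 f(z)\succeq0$ for every $z\in\widehat X$. Working with quadratic forms rather than the block Schur complement keeps the argument elementary. Fix $z$, abbreviate $H=\nabla^2f(z)$, and write an arbitrary vector as $x=v+w$ with $v\in V$ and $w\in W$. Then
\[
x^\top(\alpha P_V-H)x=\alpha\|v\|^2-v^\top Hv-2v^\top Hw-w^\top Hw .
\]
The task is to show that this expression is nonnegative.

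We bound each term using only $\|H\|_{\mathrm{op}}\le R$ and the hypothesis $-w^\top Hw\ge\mu\|w\|^2$. This gives $-v^\top Hv\ge -R\|v\|^2$ and $-2v^\top Hw\ge -2R\|v\|\|w\|$. The quadratic form is therefore at least
\[
(\alpha-R)\|v\|^2-2R\|v\|\|w\|+\mu\|w\|^2 .
\]
This is a quadratic form in $(\|v\|,\|w\|)$, and it is nonnegative as soon as $\alpha-R\ge0$ and $(\alpha-R)\mu\ge R^2$. Both conditions hold with equality for the choice $\alpha-R=R^2/\mu$. Equivalently, Young's inequality gives $2R\|v\|\|w\|\le \mu\|w\|^2+\frac{R^2}{\mu}\|v\|^2$, which absorbs the cross term. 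Since $\alpha\ge0$, we also have $\alpha P_V\succeq0$, so $\alpha P_V\in\Bcal(f;X)$.

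For the rank statement, note that $\rank(\alpha P_V)\le\dim V=n-\dim W$. Theorem~\ref{thm:minrank} then yields $d^\star(f;X)\le n-\dim W$. As a consistency check, the same conclusion also follows from Corollary~\ref{cor:uniform-neg} combined with Corollary~\ref{cor:kernel}, since the hypothesis is precisely $H_{WW}(z)\preceq-\mu I_W$. Finiteness of $R$ comes from continuity of $\nabla^2 f$ on the compact set $\widehat X$.

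There is no real obstacle here. The only step needing care is handling the cross term $v^\top Hw$ with the right constant. We bound it by $R\|v\|\|w\|$ rather than by $\|H_{VW}\|$, which is slightly wasteful but gives exactly the stated constant $R+R^2/\mu$. Alternatively, the Schur bound $R_W(z)\preceq (R+R^2/\mu)I_V$ from Theorem~\ref{thm:schur}, using $\|(-H_{WW})^{-1}\|\le1/\mu$, recovers the same constant.
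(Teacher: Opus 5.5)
Your proof is correct and follows the paper's argument: you split $u=v+w$, bound the form by $R\|v\|^2+2R\|v\|\|w\|-\mu\|w\|^2$, and absorb the cross term with Young's inequality. The only cosmetic difference is that you conclude via Theorem~\ref{thm:minrank} and $\rank(\alpha P_V)\le\dim V$, whereas the paper cites Proposition~\ref{prop:subspace}, which is itself an immediate consequence of that theorem.
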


\begin{proof}
Write $u=v+w$ with $v\in V$ and $w\in W$.  For every $z\in\widehat X$,
\[
 u^\top\nabla^2f(z)u
 \le R\|v\|^2+2R\|v\|\|w\|-\mu\|w\|^2
 \le\left(R+\frac{R^2}{\mu}\right)\|v\|^2,
\]
where the last step uses Young's inequality.  Applying proposition~\ref{prop:subspace} gives the desired result.
\end{proof}

\begin{proposition}[Ridge-structured objectives]\label{prop:ridge}
Let $A\in\R^{r\times n}$ have rank $r$, let $\varphi$ be $\cC^2$-smooth on a neighborhood of $A\widehat X$, and let
\[
         f(x)=\varphi(Ax)+q(x),
\]
where $q$ is concave quadratic.  If
$\nabla^2\varphi(u)\preceq LI_r$ on $A\widehat X$ for some $L\ge0$, then
$d^\star(f;X)\le r$.  If, in addition, $\nabla^2f(z)$ has at least $r$ positive
eigenvalues at some $z\in\widehat X$, then $d^\star(f;X)=r$.
\end{proposition}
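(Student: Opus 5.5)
The upper bound comes from an explicit majorant, and the lower bound from Proposition~\ref{prop:lower}.

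\textbf{Upper bound.} The chain rule gives
\[
 \nabla^2f(z)=A^\top\nabla^2\varphi(Az)A+\nabla^2q,
\]
where $Q:=\nabla^2q\preceq0$ is constant because $q$ is concave quadratic. Hence, for every $z\in\widehat X$,
\[
 \nabla^2f(z)\preceq A^\top\nabla^2\varphi(Az)A\preceq L\,A^\top A.
\]
The second inequality uses $\nabla^2\varphi(Az)\preceq LI_r$, which is available since $Az\in A\widehat X$. Conjugating by $A$ preserves the Loewner order, because $u^\top A^\top(LI_r-\nabla^2\varphi)Au\ge0$ for every $u$.

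Set $B:=LA^\top A$. Then $B\succeq0$ and $B-\nabla^2f(z)\succeq0$ on $\widehat X$, so $B\in\Bcal(f;X)$. Since $\rank(A^\top A)=\rank A=r$, we get $\rank B\le r$; the rank is exactly $r$ if $L>0$ and $0$ if $L=0$. Theorem~\ref{thm:minrank} then gives $d^\star(f;X)\le r$.

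\textbf{Lower bound.} Proposition~\ref{prop:lower} gives
\[
 d^\star(f;X)\ge\sup_{z\in\widehat X}n_+(\nabla^2f(z)).
\]
Under the added hypothesis this supremum is at least $r$, which together with the upper bound yields $d^\star(f;X)=r$.

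The only step needing care is the case $L=0$. There the majorant is $B=0$, so $d^\star=0\le r$ still holds and nothing breaks. The extra hypothesis cannot hold with $L=0$ (for $r\ge1$), since then $\nabla^2f\preceq0$ everywhere; so the equality case is consistent.

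Beyond that, one should check that the chain rule applies on a neighborhood of $\widehat X$. This holds because $\varphi$ is $\cC^2$ near $A\widehat X$, which makes $f$ $\cC^2$ near $\widehat X$ as the standing assumptions require. I expect no real obstacle; the argument is a direct composition of Theorem~\ref{thm:minrank} and Proposition~\ref{prop:lower}.
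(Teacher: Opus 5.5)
Your proof is correct and follows the paper's argument. The upper bound uses the majorant $B=LA^\top A$ together with Theorem~\ref{thm:minrank}, and the matching lower bound comes from Proposition~\ref{prop:lower}. Your separate treatment of $L=0$ is slightly more careful than the paper, which asserts that $LA^\top A$ has rank $r$ without excluding that case.
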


\begin{proof}
Concavity of $q$ yields $\nabla^2f(x)\preceq LA^\top A$.  Thus
$LA^\top A\in\Bcal(f;X)$ and has rank $r$.  The additional assumption and
Proposition~\ref{prop:lower} give equality.
\end{proof}

\begin{example}[A single effective curvature direction]\label{ex:one}
Let $a\in\R^n$ be a unit vector, $\mu>0$, and
\[
 f(x)=(a^\top x)^4-\frac\mu2\|P_{a^\perp}x\|^2.
\]
The decomposition $g(x)=(a^\top x)^4$ and
$h(x)=\tfrac\mu2\|P_{a^\perp}x\|^2$ has $d_X(g)=1$.  Since
\[
 \nabla^2f(z)=12(a^\top z)^2aa^\top-\mu P_{a^\perp}
\]
and a full-dimensional $\widehat X$ is not contained in $a^\perp$, the Hessian
has one positive eigenvalue at some $z\in\widehat X$.  Hence
$d^\star(f;X)=1$.
\end{example}

\begin{example}[Rotating positive curvature]\label{ex:rotating}
Let $f(x,y)=x^3-3xy^2$ on the unit ball in $\R^2$.  Then
\[
 \nabla^2f(x,y)=6\begin{pmatrix}x&-y\\-y&-x\end{pmatrix},
\]
whose eigenvalues away from the origin are
$\pm6\sqrt{x^2+y^2}$.  Thus
$\sup_Xn_+(\nabla^2f)=1$, but $d^\star(f;X)=2$.  Indeed, if a feasible $B$ had
rank at most one and $0\ne w=(a,b)\in\ker B$, feasibility at $(x,y)$ and
$(-x,-y)$ would force
\[
 w^\top\nabla^2f(x,y)w=6\{x(a^2-b^2)-2yab\}=0
\]
throughout the disk.  Hence $a^2-b^2=ab=0$, a contradiction.  Conversely,
$6I\in\Bcal(f;X)$.  Geometrically, the positive eigenspace rotates through all directions as $(x,y)$ varies throughout the ball. The example shows why pointwise positive inertia may
underestimate the joint curvature dimension.
\end{example}

\section{Finite accuracy quadratic reductions}

For $\epsilon>0$, define the isotropic score
\[
 C_\epsilon(g;X):=
 \begin{cases}
 \left(1+D_X(g)\sqrt{2L_X(g)/\epsilon}\right)^{d_X(g)},
       &d_X(g)>0,\\
 1,&d_X(g)=0.
 \end{cases}
\]
For $B\succeq0$, let $V_B=\range B$, $D_B(X)=\diam(P_{V_B}X)$, and define
$C_\epsilon(B;X)$ by replacing $d_X(g)$, $L_X(g)$, and $D_X(g)$ by
$\rank B$, $\lambda_{\max}(B)$, and $D_B(X)$, respectively.  This score records
the dimensional exponent and a simple Euclidean packing constant; it does not
retain the full anisotropic geometry of $B$.

\begin{theorem}[Quadratic reduction of the isotropic score]\label{thm:score}
For every $\epsilon>0$,
\[
 \inf_{(g,h)\in\Dcal^2(f;X)}C_\epsilon(g;X)
   =\inf_{B\in\Bcal(f;X)}C_\epsilon(B;X).
\]
More precisely, every admissible smooth decomposition has an associated
quadratic decomposition with the same score.
\end{theorem}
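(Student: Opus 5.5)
The plan is to reuse the construction in the proof of Theorem~\ref{thm:minrank}. Fix an admissible pair $(g,h)\in\Dcal^2(f;X)$ and write $V=V_X(g)$, $M=L_X(g)$. Take $B:=MP_V$. We already know from that proof that $B\in\Bcal(f;X)$ and $\rank B=d_X(g)$. It then remains to check that the other two ingredients of the score coincide, namely $\lambda_{\max}(B)=L_X(g)$ and $D_B(X)=D_X(g)$. Once this holds, $C_\epsilon(B;X)=C_\epsilon(g;X)$ termwise.

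\emph{Zero-curvature case.} If $d_X(g)=0$, then $M=0$ and $B=0$. In this case $\rank B=0$, and both scores equal $1$ by definition.

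\emph{Positive-curvature case.} If $d_X(g)>0$, then $V\ne\{0\}$. Because $V$ is spanned by Hessian ranges, some Hessian is nonzero, so $M>0$. The three ingredients then match as follows.
\begin{itemize}
\item Since $P_V$ is an orthogonal projection with nontrivial range, $\lambda_{\max}(MP_V)=M=L_X(g)$.
\item Since $M>0$, we have $\range B=V$. Hence $V_B=V$, and so $D_B(X)=\diam(P_V X)=D_X(g)$.
\item $\rank B=\dim V=d_X(g)$.
\end{itemize}
Substituting these into the formula gives $C_\epsilon(B;X)=C_\epsilon(g;X)$. This proves the ``more precisely'' statement, and it also gives $\inf_{B}C_\epsilon(B;X)\le\inf_{(g,h)}C_\epsilon(g;X)$.

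\emph{Reverse inequality.} Let $B\in\Bcal(f;X)$ and form the quadratic decomposition \eqref{eq:quadratic-decomposition}. Its first component $g_B$ has $\nabla^2 g_B\equiv B$. This gives $V_X(g_B)=\range B$, $d_X(g_B)=\rank B$, $L_X(g_B)=\lambda_{\max}(B)$ and $D_X(g_B)=D_B(X)$. Therefore $C_\epsilon(g_B;X)=C_\epsilon(B;X)$, and the two infima are equal.

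The only point needing care, and the main obstacle as I see it, is the degenerate bookkeeping. One must confirm that $L_X(g)$ is finite, so that $B$ is a genuine matrix; this follows from continuity of $\nabla^2g$ on the compact set $\widehat X$. One must also confirm that the piecewise definition of $C_\epsilon$ chooses the same branch on both sides, which holds because $\rank B=d_X(g)$. Otherwise the argument is a direct reading of Theorem~\ref{thm:minrank}.
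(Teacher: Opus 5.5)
Your proposal is correct and is essentially the paper's proof: you take $B=L_X(g)P_{V_X(g)}$ from Theorem~\ref{thm:minrank}, check that the rank, top eigenvalue, range and projected diameter all match (with $B=0$ in the zero-curvature case), and use $g_B$ from \eqref{eq:quadratic-decomposition} for the reverse inequality. Your extra bookkeeping makes explicit what the paper leaves implicit, namely why $M>0$ when $d_X(g)>0$ and why both sides select the same branch of $C_\epsilon$.
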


\begin{proof}
Given $(g,h)\in\Dcal^2(f;X)$, set
$B=L_X(g)P_{V_X(g)}$ as in Theorem~\ref{thm:minrank}.  Then
\[
 \rank B=d_X(g),\quad \lambda_{\max}(B)=L_X(g),\quad
 \range B=V_X(g),\quad D_B(X)=D_X(g).
\]
The zero curvature case is covered by $B=0$. Conversely, every
$B\in\Bcal(f;X)$ induces \eqref{eq:quadratic-decomposition}.  Equality of the
scores does not assert equality of pointwise support errors.
\end{proof}

For $B\succeq0$, write $\|z\|_B=(z^\top Bz)^{1/2}$.  If
$B\succeq\nabla^2g(z)$ on $\widehat X$, then
\begin{equation}\label{eq:anisotropic-error}
          \beta_g(y,x)\le\frac12\|y-x\|_B^2.
\end{equation}
For a compact Euclidean set $S$, let $\mathcal P(S,\rho)$ be the supremal
cardinality of a subset whose distinct points are more than $\rho$ apart.

\begin{theorem}[Best constant semidefinite packing geometry]\label{thm:packing-geometry}
For every $\epsilon>0$,
\[
 \inf_{(g,h)\in\Dcal^2(f;X)}\ \inf_{B\in\Bcal(g;X)}
 \mathcal P(B^{1/2}X,\sqrt{2\epsilon})
 =\inf_{B\in\Bcal(f;X)}\mathcal P(B^{1/2}X,\sqrt{2\epsilon}).
\]
Moreover, for every $B\in\Bcal(f;X)$,
\[
 \mathcal P(B^{1/2}X,\sqrt{2\epsilon})
 \le\left(1+\frac{\sqrt2\,\diam(B^{1/2}X)}{\sqrt\epsilon}\right)^{\rank B}.
\]
\end{theorem}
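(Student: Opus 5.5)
The statement has two parts: the equality of infima, and the volumetric packing bound. I will prove them separately.

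\emph{Equality of infima.} The key observation is the set identity
\[
 \bigcup_{(g,h)\in\Dcal^2(f;X)}\Bcal(g;X)=\Bcal(f;X).
\]
Once this holds, both sides of the first display are infima of the same function $B\mapsto\mathcal P(B^{1/2}X,\sqrt{2\epsilon})$ over the same set, so they agree.

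For ``$\subseteq$'', take an admissible pair $(g,h)$ and $B\in\Bcal(g;X)$. Then $B\succeq\nabla^2g(z)=\nabla^2f(z)+\nabla^2h(z)\succeq\nabla^2f(z)$ on $\widehat X$. Since $B\succeq0$ by definition of $\Bcal$, we get $B\in\Bcal(f;X)$.

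For ``$\supseteq$'', take $B\in\Bcal(f;X)$ and use the quadratic decomposition \eqref{eq:quadratic-decomposition}. This pair is admissible. Moreover $\nabla^2g_B\equiv B$, so $B\in\Bcal(g_B;X)$ trivially.

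\emph{Packing bound.} Fix $B\in\Bcal(f;X)$, put $S=B^{1/2}X$ and $r=\rank B$. Because $B^{1/2}$ has range $V_B=\range B$, the set $S$ is a compact subset of the $r$-dimensional space $V_B$, and $\diam S=:D$.

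If $r=0$, then $S$ is a single point, the packing number is $1$, and the bound holds.

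Otherwise, use the standard volumetric argument inside $V_B\cong\R^r$. Let $\rho=\sqrt{2\epsilon}$ and take any finite set of points in $S$ that are pairwise more than $\rho$ apart.
\begin{itemize}
\item The open balls of radius $\rho/2$ in $V_B$ centered at these points are pairwise disjoint.
\item Fix one point $s_0\in S$. Every ball is contained in the ball of radius $D+\rho/2$ around $s_0$.
\item Comparing $r$-dimensional Lebesgue volumes gives a count of at most $\bigl((D+\rho/2)/(\rho/2)\bigr)^r=(1+2D/\rho)^r$.
\end{itemize}
Since $2D/\sqrt{2\epsilon}=\sqrt2\,D/\sqrt\epsilon$, this is exactly the claimed bound. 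Because it holds for every finite separated subset, it also bounds the supremum, so $\mathcal P$ is finite.

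The only step needing care is the dimension bookkeeping: the volume comparison must be done in $\range B$ rather than in $\R^n$, so that the exponent is $\rank B$ and not $n$. This works because $B^{1/2}X\subset\range B$. No earlier result is needed beyond the definitions and the admissibility of \eqref{eq:quadratic-decomposition}.
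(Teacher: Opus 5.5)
Your proof is correct and matches the paper's: the same two inclusions (every $B\in\Bcal(g;X)$ majorizes $\nabla^2 f$ because $\nabla^2 h\succeq0$, and every $B\in\Bcal(f;X)$ lies in $\Bcal(g_B;X)$ for the quadratic decomposition), followed by the standard volumetric packing bound carried out in $\range B$. You only add details the paper leaves implicit, namely the explicit volume comparison giving $(1+2D/\rho)^{r}$ and the $\rank B=0$ case.
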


\begin{proof}
If $B\in\Bcal(g;X)$, then
$B\succeq\nabla^2g=\nabla^2f+\nabla^2h\succeq\nabla^2f$, so
$B\in\Bcal(f;X)$.  Conversely, every $B\in\Bcal(f;X)$ generates $g_B$ with $B\in\Bcal(g_B;X)$.  The volume estimate is the standard packing bound in the Euclidean space $\range B$, whose dimension is $\rank B$.
\end{proof}

\begin{remark}
A minimum rank majorant need not minimize the packing number at a fixed
$\epsilon$.  Effective dimension controls the asymptotic exponent, while the
eigenvalues of $B$ and the geometry of $B^{1/2}X$ determine the constants.
\end{remark}

\section{Adaptive polyhedral global approximation}

In this section, let $g$ and $h$ be finite convex functions on an open convex
neighborhood of $\widehat X$, and consider
\[
                 f^\star:=\min_{x\in X}\{g(x)-h(x)\}.
\]
At $x_i\in X$, choose $v_i\in\partial g(x_i)$ and define
\[
 \ell_i(z):=g(x_i)+\langle v_i,z-x_i\rangle,
 \qquad g_k(z):=\max_{0\le i\le k}\ell_i(z).
\]
The following is the lower-model mechanism of \cite[Algorithm~3]{PiraniUlus2025}.

\begin{center}
\begin{minipage}{0.92\textwidth}
\hrule
\vspace{0.5em}
\refstepcounter{algorithm}\textbf{Algorithm \thealgorithm: Adaptive lower-model method}\label{alg:adaptive}
\begin{enumerate}[leftmargin=2em,itemsep=0.25em]
\item Choose $x_0\in X$ and $v_0\in\partial g(x_0)$; set $g_0=\ell_0$.
\item For $k=1,2,\ldots$, compute globally
\[
x_k\in\argmin_{x\in X}\{g_{k-1}(x)-h(x)\}.
\]
\item If $g(x_k)-g_{k-1}(x_k)\le\epsilon$, return $x_k$.
Otherwise choose $v_k\in\partial g(x_k)$, set
$g_k=\max\{g_{k-1},\ell_k\}$, and repeat.
\end{enumerate}
\vspace{0.2em}
\hrule
\end{minipage}
\end{center}

The initialization point $x_0$ is not counted as a solved lower-model
subproblem.  If $K$ is the first terminating loop index, then $K$ is the number of subproblems solved globally. Because $g_{K-1}\le g$, termination implies
$g(x_K)-h(x_K)\le f^\star+\epsilon$.

For a stored support $(x_i,v_i)$, define
\[
       \beta_g(y;x_i,v_i)
       :=g(y)-g(x_i)-\langle v_i,y-x_i\rangle.
\]

\begin{lemma}[Separation of non-terminating iterates]\label{lem:separation}
If iteration $j$ is non-terminating, then
\[
       \beta_g(x_j;x_i,v_i)>\epsilon\qquad(0\le i<j).
\]
\end{lemma}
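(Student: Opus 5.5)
The argument is a direct unwinding of the algorithm's termination test, using only two facts: the lower model $g_{j-1}$ dominates every earlier support, and the test at iteration $j$ compares $g(x_j)$ with $g_{j-1}(x_j)$. Fix a non-terminating iteration $j\ge1$ and an index $0\le i<j$.

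\textbf{Step 1 (failed test).} Since iteration $j$ does not terminate, the test in step 3 of Algorithm~\ref{alg:adaptive} fails at $x_j$:
\[
g(x_j)-g_{j-1}(x_j)>\epsilon .
\]

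\textbf{Step 2 (model dominates each support).} The model $g_{j-1}$ is built by successive maxima, so $g_{j-1}=\max_{0\le l\le j-1}\ell_l$. For $i\le j-1$ this gives $g_{j-1}(z)\ge\ell_i(z)$ for every $z$. Combining with Step 1,
\[
\beta_g(x_j;x_i,v_i)=g(x_j)-\ell_i(x_j)\ge g(x_j)-g_{j-1}(x_j)>\epsilon .
\]
This is the claimed separation.

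\textbf{Bookkeeping points.}
\begin{itemize}
\item The index range $0\le i<j$ matches exactly the supports present in $g_{j-1}$. In particular, $\ell_0$ is included even though $x_0$ is not counted as a subproblem.
\item The identity $\beta_g(x_j;x_i,v_i)=g(x_j)-\ell_i(x_j)$ holds by the definition of $\ell_i$, with $v_i\in\partial g(x_i)$ fixed at storage time.
\end{itemize}
No convexity or smoothness of $g$ is needed beyond these definitions, so I expect no real obstacle.

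The only care needed is that $g_{j-1}$ is the model in use at iteration $j$, before $\ell_j$ is added. The strict inequality comes directly from the strictness of the failed test.
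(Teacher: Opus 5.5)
Your proposal is correct and follows essentially the same argument as the paper. The failed termination test gives $g(x_j)-g_{j-1}(x_j)>\epsilon$, and $\ell_i\le g_{j-1}$ for every $i<j$ yields $\beta_g(x_j;x_i,v_i)=g(x_j)-\ell_i(x_j)>\epsilon$.
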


\begin{proof}
Non-termination gives $g(x_j)-g_{j-1}(x_j)>\epsilon$.  Since
$\ell_i\le g_{j-1}$ pointwise for every $i<j$, the claim follows.
\end{proof}

\begin{theorem}[Effective dimension outer iteration bound]\label{thm:algorithm}
Assume $g$ is $\cC^2$-smooth on a neighborhood of $\widehat X$.  If $d_X(g)=0$,
Algorithm~\ref{alg:adaptive} terminates after solving the first lower-model subproblem. If $d_X(g)>0$, it terminates after at most
\begin{equation}\label{eq:Nepsilon}
 N_\epsilon=
 \mathcal P\left(P_{V_X(g)}X,\sqrt{\frac{2\epsilon}{L_X(g)}}\right)
 \le
 \left(1+D_X(g)\sqrt{\frac{2L_X(g)}{\epsilon}}\right)^{d_X(g)}
\end{equation}
globally solved lower-model subproblems.  Consequently, the outer iteration count is $O(\epsilon^{-d_X(g)/2})$ as $\epsilon\downarrow0$.
\end{theorem}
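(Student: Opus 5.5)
The plan is a packing argument in the projected curvature space, driven by Lemma~\ref{lem:separation} and Lemma~\ref{lem:support}. Since $g$ is $\cC^2$ on a neighborhood of $\widehat X$, its subdifferential is $\{\nabla g(x)\}$, so $v_i=\nabla g(x_i)$ and $\beta_g(y;x_i,v_i)=\beta_g(y,x_i)$.

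We first dispose of $d_X(g)=0$. In that case $g$ is affine on $\widehat X$, so every Bregman error vanishes on $X$. If iteration $1$ were non-terminating, Lemma~\ref{lem:separation} would give $\beta_g(x_1,x_0)>\epsilon>0$, which is impossible. Hence the algorithm returns at $k=1$.

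Now let $d_X(g)>0$ and abbreviate $V=V_X(g)$, $L=L_X(g)$. Then $L>0$, because $L=0$ forces all Hessians to vanish and hence $V=\{0\}$. Suppose iterations $1,\dots,K-1$ are all non-terminating. By Lemma~\ref{lem:separation}, for every $0\le i<j\le K-1$ we have $\epsilon<\beta_g(x_j,x_i)$. Lemma~\ref{lem:support} bounds this by $\tfrac L2\|P_V(x_j-x_i)\|^2$, and so
\[
\|P_Vx_j-P_Vx_i\|>\sqrt{2\epsilon/L}.
\]
The projected points $P_Vx_0,\dots,P_Vx_{K-1}$ are therefore pairwise more than $\sqrt{2\epsilon/L}$ apart. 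In particular they are distinct, and they form a subset of $P_VX$ of cardinality $K$. By the definition of $\mathcal P$ this gives $K\le\mathcal P(P_VX,\sqrt{2\epsilon/L})=N_\epsilon$. So at most $N_\epsilon$ non-terminating iterates can occur among the indices $0,\dots,K-1$. Here $x_0$ counts as a stored point but not as a solved subproblem, which actually yields $K-1$ non-terminating solved subproblems plus one terminating one. This gives the total bound $K\le N_\epsilon$, finite since $P_VX$ is compact. The counting must be done carefully so that the offset by one lands exactly on $N_\epsilon$: the $K-1$ non-terminating iterates together with $x_0$ give $K$ separated points.

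It remains to prove the volume estimate $\mathcal P(S,\rho)\le(1+2\diam S/\rho)^{d}$ for $S=P_VX\subset V$, with $d=\dim V$ and $\rho=\sqrt{2\epsilon/L}$. This is the step needing the most care with constants. Take a $\rho$-separated set in $S$. The open balls of radius $\rho/2$ centered at its points, intersected with the affine copy of $V$, are disjoint. They all lie in a ball of radius $\diam S+\rho/2$ about any one of the points. Comparing $d$-dimensional volumes gives the count $\le\bigl((\diam S+\rho/2)/(\rho/2)\bigr)^d=(1+2\diam S/\rho)^d$. Substituting $2D_X(g)/\sqrt{2\epsilon/L}=D_X(g)\sqrt{2L/\epsilon}$ matches \eqref{eq:Nepsilon}. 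The $O(\epsilon^{-d_X(g)/2})$ statement then follows since $D_X(g)$, $L$ and $d_X(g)$ do not depend on $\epsilon$. The main obstacle is mostly bookkeeping: checking that the separation in Lemma~\ref{lem:separation} is strict (which the definition of $\mathcal P$ requires) and that the count of stored points versus solved subproblems is aligned.
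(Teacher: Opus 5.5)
Your proof is correct and follows the paper's argument: Lemmas~\ref{lem:support} and~\ref{lem:separation} make the stored points $P_Vx_0,\dots,P_Vx_{K-1}$ pairwise more than $\sqrt{2\epsilon/L_X(g)}$ apart, so $K\le N_\epsilon$. The paper phrases this as a contradiction with $N_\epsilon+1$ points; that is your argument with $K=N_\epsilon+1$, and stating it that way would make termination itself explicit rather than implicit in your counting paragraph. You also spell out the volume bound and the strictness/distinctness details that the paper leaves as standard.
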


\begin{proof}
Set $V=V_X(g)$ and $\rho=\sqrt{2\epsilon/L_X(g)}$. Lemmas~\ref{lem:support} and \ref{lem:separation} imply
that whenever iteration $j$ is non-terminating,
\[
             \|P_V(x_j-x_i)\|>\rho\qquad(0\le i<j).
\]
Let $N=\mathcal P(P_VX,\rho)$, which is finite because $P_VX$ is compact. If iterations $1,\ldots,N$ were all
non-terminating, then $P_Vx_0,\ldots,P_Vx_N$ would be $N+1$ pairwise $\rho$-separated points, contradicting the definition of $N$. Thus termination occurs after at most $N$ subproblems. The second inequality is the standard Euclidean volume bound.  If $d_X(g)=0$, $g$ is affine on $\widehat X$ and its initial tangent support is exact.
\end{proof}

\begin{remark}[Scope of the bound]
Theorem~\ref{thm:algorithm} bounds outer refinements, equivalently the number of
globally solved lower-model subproblems.  Each subproblem is assumed to be
solved exactly.  The theorem does not bound arithmetic operations, oracle calls
inside a global solver, or the complexity of inexact subproblem solution.
\end{remark}

\section{Sharpness for uniform tangent-support approximation}

For a compact set $S$ in a Euclidean space and $r>0$, define the internal
covering number
\[
 \mathcal N(S,r):=\min\left\{m:\exists s_1,\ldots,s_m\in S,\
                S\subset\bigcup_{i=1}^m(s_i+r\mathbb B)\right\}.
\]

\begin{theorem}[Exact covering formula]\label{thm:covering}
Let $B\succeq0$, $g_B(x)=\tfrac12x^\top Bx$, and let $X$ be compact.  For
$a_1,\ldots,a_m\in X$, define
\[
 g_m(x):=\max_{1\le i\le m}
 \{g_B(a_i)+\langle Ba_i,x-a_i\rangle\}.
\]
Then
\[
 \sup_{x\in X}(g_B(x)-g_m(x))
 =\frac12\sup_{x\in X}\min_{1\le i\le m}
           \|B^{1/2}(x-a_i)\|^2.
\]
Consequently, the minimum number of tangent supports needed for uniform error
at most $\epsilon$ is exactly
$\mathcal N(B^{1/2}X,\sqrt{2\epsilon})$.
\end{theorem}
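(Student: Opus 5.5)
The plan is to first prove the pointwise identity and then read off the covering statement. Fix $x\in X$ and a support point $a_i$. Since $g_B$ is quadratic, its Taylor expansion at $a_i$ is exact:
\[
 g_B(x)-g_B(a_i)-\langle Ba_i,x-a_i\rangle=\tfrac12(x-a_i)^\top B(x-a_i)=\tfrac12\|B^{1/2}(x-a_i)\|^2 .
\]
Because $g_m$ is the maximum of the tangent functions, $g_B(x)-g_m(x)$ equals the minimum over $i$ of the individual Bregman gaps. This gives
\[
 g_B(x)-g_m(x)=\tfrac12\min_{1\le i\le m}\|B^{1/2}(x-a_i)\|^2
\]
pointwise. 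Taking the supremum over $x\in X$ yields the displayed formula. No compactness subtleties arise, since both sides are suprema of the same function.

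For the covering consequence, let $S=B^{1/2}X$, which is compact as the linear image of a compact set. Set $s_i=B^{1/2}a_i\in S$. By the identity, uniform error at most $\epsilon$ holds if and only if every $x\in X$ satisfies $\min_i\|B^{1/2}x-s_i\|\le\sqrt{2\epsilon}$. Equivalently, $S\subset\bigcup_i(s_i+\sqrt{2\epsilon}\,\mathbb B)$, where $\mathbb B$ is the closed unit ball; the closed ball is what matches the non-strict inequality. Hence any admissible family of $m$ supports yields an internal cover of $S$ of cardinality $m$, so $m\ge\mathcal N(S,\sqrt{2\epsilon})$.

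For the converse, take an optimal internal cover with centers $s_1,\ldots,s_m\in S$ and $m=\mathcal N(S,\sqrt{2\epsilon})$. Choose preimages $a_i\in X$ with $B^{1/2}a_i=s_i$, which exist because the centers are required to lie in $S=B^{1/2}X$. The identity then shows that these supports achieve error at most $\epsilon$. Two small points need checking. First, the tangent function at $a_i$ depends on $a_i$ only through $Ba_i$ and $g_B(a_i)$ up to the error formula, but the formula itself is what matters, and it depends on $a_i$ only through $s_i$, so the choice of preimage is harmless. Second, the minimum defining $\mathcal N$ is attained and finite by compactness of $S$.

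The step I expect to be most delicate is this preimage and internality issue. It is exactly why the internal covering number, with centers in $S$, is the right quantity rather than an external one: supports must be tangent at points of $X$. Closed versus open balls should be made consistent with the ``at most $\epsilon$'' requirement. The rest is routine.
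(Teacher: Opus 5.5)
Your proposal is correct and follows the paper's proof essentially verbatim. Both arguments use the exact Taylor identity $g_B(x)-g_B(a_i)-\langle Ba_i,x-a_i\rangle=\tfrac12\|B^{1/2}(x-a_i)\|^2$, pass from the maximum of tangents to the minimum of gaps, take the supremum over $X$, and then use $B^{1/2}a_i\in B^{1/2}X$ in one direction and preimages in $X$ (even for singular $B$) in the other. Your explicit remark that closed balls match the non-strict requirement ``error at most $\epsilon$'' is a useful clarification that the paper leaves implicit.
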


\begin{proof}
For each $a_i$,
\[
 g_B(x)-g_B(a_i)-\langle Ba_i,x-a_i\rangle
       =\frac12\|B^{1/2}(x-a_i)\|^2.
\]
Take the minimum over $i$ and then the supremum over $x\in X$.  The covering
centers are internal because $B^{1/2}a_i\in B^{1/2}X$. Conversely, every center in $B^{1/2}X$ has a preimage in $X$, even if $B$ is singular.
\end{proof}

\begin{theorem}[Sharp effective dimension exponent]\label{thm:sharp}
Let $B\succeq0$ have rank $d>0$, set $S=B^{1/2}X\subset\range B$, and suppose
that for some $s_0\in\range B$ and $0<r_-\le r_+$,
\[
       s_0+r_-\mathbb B_d\subset S\subset s_0+r_+\mathbb B_d,
\]
where the balls are taken in $\range B$.  Let $m_\epsilon$ be the minimum number
of tangent supports needed for uniform approximation of $g_B$ on $X$ within
$\epsilon$, and set $\delta_\epsilon=\sqrt{2\epsilon}$.  If
$\delta_\epsilon<r_-$, then
\[
    \left(\frac{r_-}{\delta_\epsilon}\right)^d
    \le m_\epsilon\le
    \left(1+\frac{2r_+}{\delta_\epsilon}\right)^d.
\]
In particular, $m_\epsilon=\Theta(\epsilon^{-d/2})$.
\end{theorem}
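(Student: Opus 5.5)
By Theorem~\ref{thm:covering}, $m_\epsilon=\mathcal N(S,\delta_\epsilon)$ with $S=B^{1/2}X$. We will therefore bound this internal covering number from both sides by comparing $d$-dimensional Lebesgue volumes in the Euclidean space $\range B\cong\R^d$. Throughout, $\mathbb B_d$ denotes the unit ball of $\range B$ and $\omega_d$ its volume.

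\emph{Lower bound.} Suppose $s_1,\dots,s_m\in S$ satisfy $S\subset\bigcup_i(s_i+\delta_\epsilon\mathbb B_d)$.
\begin{itemize}
\item The covering balls may be taken inside $\range B$, since $S$ and the $s_i$ lie there.
\item The set $S$ contains $s_0+r_-\mathbb B_d$, so its volume is at least $\omega_d r_-^d$.
\item Subadditivity of volume then gives $\omega_d r_-^d\le m\,\omega_d\delta_\epsilon^d$.
\end{itemize}
Hence $m_\epsilon\ge(r_-/\delta_\epsilon)^d$.

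\emph{Upper bound.} Take a maximal $\delta_\epsilon$-separated subset $\{s_1,\dots,s_m\}\subset S$, meaning distinct points lie at distance greater than $\delta_\epsilon$. This needs the following steps.
\begin{itemize}
\item \emph{Existence and finiteness.} Greedy selection produces such a set. It is finite because $S$ is bounded, which the volume argument below also shows.
\item \emph{It is an internal cover.} By maximality, every $s\in S$ lies within $\delta_\epsilon$ of some $s_i$, and the centers $s_i$ belong to $S$.
\item \emph{Counting.} The balls $s_i+(\delta_\epsilon/2)\mathbb B_d$ are pairwise disjoint. Each lies in $s_0+(r_++\delta_\epsilon/2)\mathbb B_d$, because $\|s_i-s_0\|\le r_+$.
\end{itemize}
Comparing volumes gives
\[
 m\le\left(\frac{r_++\delta_\epsilon/2}{\delta_\epsilon/2}\right)^d=\left(1+\frac{2r_+}{\delta_\epsilon}\right)^d .
\]
The hypothesis $\delta_\epsilon<r_-$ is used only so that the lower bound is at least $1$ and hence informative. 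The upper bound holds for every $\epsilon$.

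\emph{Asymptotics.} Both bounds are of order $\delta_\epsilon^{-d}=(2\epsilon)^{-d/2}$ as $\epsilon\downarrow0$: the constants are $r_-^d$ and, since $1+2r_+/\delta_\epsilon\le 3r_+/\delta_\epsilon$ for small $\delta_\epsilon$, at most $(3r_+)^d$. Therefore $m_\epsilon=\Theta(\epsilon^{-d/2})$.

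The only point needing care is the lower bound. Covering and volumes must be measured intrinsically in $\range B$ rather than in $\R^n$, where $S$ has measure zero when $d<n$. Once everything is placed in $\range B\cong\R^d$, the rest is the routine volume comparison above.
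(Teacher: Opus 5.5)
Your proof is correct and follows the paper's argument. You reduce to $\mathcal N(S,\delta_\epsilon)$ via Theorem~\ref{thm:covering}, get the lower bound by comparing the volume of the inscribed $r_-$-ball with $m$ balls of radius $\delta_\epsilon$ in $\range B$, and get the upper bound from a maximal $\delta_\epsilon$-separated subset whose disjoint $\delta_\epsilon/2$-balls lie in $s_0+(r_++\delta_\epsilon/2)\mathbb B_d$; your added remarks (volumes measured in $\range B$, and $\delta_\epsilon<r_-$ not being needed for either inequality) are accurate refinements of the same proof.
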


\begin{proof}
By Theorem~\ref{thm:covering},
$m_\epsilon=\mathcal N(S,\delta_\epsilon)$.  Any such cover must cover the
$d$-dimensional ball of radius $r_-$ contained in $S$. Therefore, comparison of volumes gives the lower bound.  For the upper bound, take a maximal $\delta_\epsilon$-separated subset of $S$.  Maximality makes it an internal
$\delta_\epsilon$-cover. Its balls of radius $\delta_\epsilon/2$ are disjoint
and lie in a ball of radius $r_++\delta_\epsilon/2$, giving the upper bound.
\end{proof}

\begin{corollary}[Quadratic ball]\label{cor:ball}
For $X=R\mathbb B_d$ and $g(x)=\tfrac\mu2\|x\|^2$, with $R,\mu>0$, the minimum
number of tangent supports required for uniform error at most $\epsilon$ is
\[
       \Theta\left(\left(R\sqrt{\frac\mu\epsilon}\right)^d\right)
       =\Theta(\epsilon^{-d/2}).
\]
\end{corollary}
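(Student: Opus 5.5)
We will obtain Corollary~\ref{cor:ball} by applying Theorem~\ref{thm:sharp} with $n=d$, $X=R\mathbb B_d$ and $B=\mu I_d$. Then $g=g_B$ exactly, since $\tfrac12x^\top(\mu I)x=\tfrac\mu2\|x\|^2$. Moreover $\rank B=d>0$, $\range B=\R^d$, and $B^{1/2}=\sqrt\mu\,I_d$. The transformed set is therefore
\[
S=B^{1/2}X=\sqrt\mu\,R\,\mathbb B_d,
\]
which is a Euclidean ball centered at $s_0=0$. The sandwich hypothesis of Theorem~\ref{thm:sharp} then holds with $r_-=r_+=R\sqrt\mu$. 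By Theorem~\ref{thm:covering}, the minimum number $m_\epsilon$ of tangent supports for uniform error at most $\epsilon$ is exactly $\mathcal N(S,\sqrt{2\epsilon})$. Theorem~\ref{thm:sharp} already packages this identification together with the two-sided bound.

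Next we substitute $\delta_\epsilon=\sqrt{2\epsilon}$. For $\sqrt{2\epsilon}<R\sqrt\mu$, that is $\epsilon<\mu R^2/2$, this gives
\[
\left(\frac{R\sqrt\mu}{\sqrt{2\epsilon}}\right)^d\le m_\epsilon\le\left(1+\frac{2R\sqrt\mu}{\sqrt{2\epsilon}}\right)^d .
\]
The lower bound is $2^{-d/2}\bigl(R\sqrt{\mu/\epsilon}\bigr)^d$. For the upper bound, in the same regime we have $1\le R\sqrt\mu/\sqrt{2\epsilon}$. Hence
\[
1+\frac{2R\sqrt\mu}{\sqrt{2\epsilon}}\le\frac{3R\sqrt\mu}{\sqrt{2\epsilon}},
\]
so the upper bound is at most $(3/\sqrt2)^d\bigl(R\sqrt{\mu/\epsilon}\bigr)^d$. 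Both constants depend only on $d$. This yields $m_\epsilon=\Theta\bigl((R\sqrt{\mu/\epsilon})^d\bigr)$, and for fixed $R,\mu$ this is $\Theta(\epsilon^{-d/2})$.

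No real obstacle is expected. The one point to state explicitly is the meaning of $\Theta$ here: it is asymptotic as $\epsilon\downarrow0$, or uniform over the regime $\epsilon<\mu R^2/2$ with constants depending only on $d$. For $\epsilon\ge\mu R^2/2$, a single support at the origin already achieves error $\tfrac\mu2R^2/1\cdot\tfrac12\le\epsilon$, so $m_\epsilon=1$ and the asymptotic statement is unaffected. We also confirm that $X=R\mathbb B_d$ is compact and full dimensional, as the standing assumptions require.
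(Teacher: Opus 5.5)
Your argument is correct and is exactly the intended one: the paper gives no separate proof, treating the corollary as the specialization of Theorem~\ref{thm:sharp} to $B=\mu I_d$, $S=R\sqrt\mu\,\mathbb B_d$, $r_-=r_+=R\sqrt\mu$, and your constant bookkeeping for $\epsilon<\mu R^2/2$ fills in the $\Theta$ cleanly. One cosmetic slip: in the regime $\epsilon\ge\mu R^2/2$ the single support at the origin has uniform error exactly $\tfrac\mu2R^2\le\epsilon$, so the stray factors in that expression should be dropped.
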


\begin{remark}[Uniform versus pathwise sharpness]
Theorems~\ref{thm:covering}--\ref{thm:sharp} establish sharpness for uniform
tangent support approximation, not a pathwise lower bound for
Algorithm~\ref{alg:adaptive}.  The algorithm checks model error only at its
selected lower-model minimizer and may terminate while the uniform error is
larger than $\epsilon$.  If $B^{1/2}X$ has empty relative interior in
$\range B$, the uniform exponent may also be smaller than $\rank(B)/2$.
\end{remark}

\section{DC dominance and quadratic objectives}

Let $(g_1,h_1),(g_2,h_2)\in\Dcal^2(f;X)$.  Following the classical dominance
order, the first pair \emph{weakly dominates} the second if
$q=g_2-g_1=h_2-h_1$ is convex on $\widehat X$
\cite{AhmadiHall2018,BomzeLocatelli2004}.

\begin{theorem}[Effect of weak dominance]\label{thm:dominance}
If $q$ is $\cC^2$-smooth, then
\[
               V_X(g_2)=V_X(g_1)+V_X(q).
\]
Consequently,
\[
 \begin{aligned}
 d_X(g_1)&\le d_X(g_2),&
 L_X(g_1)&\le L_X(g_2),\\
 D_X(g_1)&\le D_X(g_2),&
 C_\epsilon(g_1;X)&\le C_\epsilon(g_2;X).
 \end{aligned}
\]
On any common support set, the complete lower model generated by $(g_1,h_1)$
is pointwise no smaller than the model generated by $(g_2,h_2)$.
\end{theorem}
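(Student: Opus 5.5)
The plan starts from the Hessian identity $\nabla^2 g_2(z)=\nabla^2 g_1(z)+\nabla^2 q(z)$ on $\widehat X$. Both summands are positive semidefinite: $g_1$ is convex because $(g_1,h_1)\in\Dcal^2(f;X)$, and $q$ is convex by weak dominance. For positive semidefinite $P,Q$ one has $\ker(P+Q)=\ker P\cap\ker Q$, since $w^\top(P+Q)w=0$ forces $w^\top Pw=w^\top Qw=0$. Taking orthogonal complements gives $\range(P+Q)=\range P+\range Q$. Spanning over all $z\in\widehat X$ then yields $V_X(g_2)=V_X(g_1)+V_X(q)$; the span of a union of sums equals the sum of the spans. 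In particular $V_X(g_1)\subset V_X(g_2)$, which gives $d_X(g_1)\le d_X(g_2)$ immediately.

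\textbf{Curvature constant.} Since $\nabla^2 q(z)\succeq0$, we have $\nabla^2 g_1(z)\preceq\nabla^2 g_2(z)$ pointwise. Hence $\lambda_{\max}(\nabla^2 g_1(z))\le\lambda_{\max}(\nabla^2 g_2(z))$, and taking suprema gives $L_X(g_1)\le L_X(g_2)$.

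\textbf{Projected diameter.} From $V_X(g_1)\subset V_X(g_2)$ we get $P_{V_X(g_1)}=P_{V_X(g_1)}P_{V_X(g_2)}$, and this projection is $1$-Lipschitz. Therefore $\|P_{V_X(g_1)}(x-y)\|\le\|P_{V_X(g_2)}(x-y)\|$, so $D_X(g_1)\le D_X(g_2)$.

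\textbf{Score.} If $d_X(g_1)=0$, then $C_\epsilon(g_1;X)=1\le C_\epsilon(g_2;X)$, because every score is at least $1$. Otherwise $d_X(g_2)\ge d_X(g_1)>0$. The base $1+D\sqrt{2L/\epsilon}$ is monotone in $D$ and $L$ and is at least $1$. Raising a base at least $1$ to a larger exponent does not decrease it. This gives the score comparison.

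\textbf{Lower models.} Fix supports $x_i$ and use the gradients $v_i=\nabla g_j(x_i)$. The model for $(g_j,h_j)$ is $\max_i \ell^{(j)}_i - h_j$, which should be read as a model of $f$. Since $g_2=g_1+q$ and $h_2=h_1+q$,
\[
\ell^{(2)}_i(z)-h_2(z)=\ell^{(1)}_i(z)-h_1(z)+\bigl[q(x_i)+\langle\nabla q(x_i),z-x_i\rangle-q(z)\bigr].
\]
The bracketed term is $\le0$ because $q$ is convex on $\widehat X$, and it holds for all $z\in\widehat X$. Taking the maximum over $i$ gives that the model built from $(g_1,h_1)$ is pointwise at least the model built from $(g_2,h_2)$.

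\textbf{Main obstacle.} The only genuinely delicate points are interpretive. One is fixing what ``complete lower model'' means, namely $g_k-h$ rather than $g_k$ alone; the other is the zero-dimensional case of the score. The range-sum identity in the first step is the key technical step, and the kernel-intersection argument handles it cleanly.
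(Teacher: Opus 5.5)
Your proposal is correct and follows the paper's proof essentially step for step. The steps are: the identity $\range(A+C)=\range A+\range C$ for positive semidefinite $A,C$ applied to $\nabla^2 g_2=\nabla^2 g_1+\nabla^2 q$; subspace inclusion, L\"owner monotonicity and bases at least one for the four inequalities; and the bound $\ell_i^q\le q$ from convexity of $q$ for the lower-model comparison. The only addition is that you justify the range identity via $\ker(P+Q)=\ker P\cap\ker Q$, which the paper simply asserts.
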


\begin{proof}
For positive semidefinite matrices $A$ and $C$,
$\range(A+C)=\range A+\range C$.  Apply this identity to
$\nabla^2g_2(z)=\nabla^2g_1(z)+\nabla^2q(z)$ and take spans over
$z\in\widehat X$.  The four inequalities follow from subspace inclusion, L\"owner monotonicity, and the fact that the bases in the definition of
$C_\epsilon$ are at least one.

At common support points, write the tangent support to $g_2$ as
$\ell_i^{g_1}+\ell_i^q$.  Convexity gives $\ell_i^q\le q$, and hence
\[
 \max_i(\ell_i^{g_1}+\ell_i^q)-h_1-q
       \le \max_i\ell_i^{g_1}-h_1.
\]
The left side is the lower model associated with $(g_2,h_2)$, while the right side is the one associated with $(g_1,h_1)$.  This fixed support comparison does not imply a pathwise ordering once two executions generate different
support points.
\end{proof}

\begin{theorem}[Quadratic objectives]\label{thm:quadratic}
Let
\[
      f(x)=\frac12x^\top Qx+c^\top x+\beta,
      \qquad Q=Q^\top.
\]
Then $d^\star(f;X)=n_+(Q)$.  If $Q=Q_+-Q_-$ is the positive--negative spectral
decomposition, the pair
\[
      g(x)=\frac12x^\top Q_+x,
      \qquad
      h(x)=\frac12x^\top Q_-x-c^\top x-\beta
\]
attains the minimum.
\end{theorem}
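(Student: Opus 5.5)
The plan is to prove the two inequalities $d^\star(f;X)\ge n_+(Q)$ and $d^\star(f;X)\le n_+(Q)$ separately, and to obtain the upper bound from the explicit spectral pair in the statement. Since $\nabla^2 f(z)=Q$ for every $z\in\widehat X$, the semi-infinite constraint in Theorem~\ref{thm:minrank} reduces to the single inequality $B\succeq Q$. Theorem~\ref{thm:minrank} therefore gives
\[
d^\star(f;X)=\min\{\rank B: B\succeq 0,\ B\succeq Q\}.
\]

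For the lower bound I will use Proposition~\ref{prop:lower}. It gives $d^\star(f;X)\ge \sup_{z\in\widehat X} n_+(\nabla^2 f(z))=n_+(Q)$. To avoid relying on that proposition, one can also argue directly. If $B\succeq Q$ and $\rank B<n_+(Q)$, then $\ker B$ has dimension greater than $n-n_+(Q)$. It therefore meets the positive eigenspace of $Q$, which has dimension $n_+(Q)$, in some nonzero vector $w$. For this $w$ we get $0=w^\top Bw\ge w^\top Qw>0$, a contradiction.

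For the upper bound I take $B=Q_+$, where $Q=U\Lambda U^\top$, $Q_+=U\Lambda_+U^\top$ and $Q_-=U\Lambda_-U^\top$, with $\Lambda_\pm$ the positive and negative parts of $\Lambda$. Then $Q_+\succeq0$, and $Q_+-Q=Q_-\succeq0$, so $Q_+\in\Bcal(f;X)$. Moreover $\rank Q_+=n_+(Q)$, which gives $d^\star(f;X)\le n_+(Q)$.

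Finally I check that the displayed pair is admissible and attains the minimum. The function $g$ is the quadratic $g_{Q_+}$ from \eqref{eq:quadratic-decomposition}. The function $h=\tfrac12x^\top Q_-x-c^\top x-\beta$ is a convex quadratic. A direct computation gives $g-h=\tfrac12x^\top(Q_+-Q_-)x+c^\top x+\beta=f$. The Hessian of $g$ is the constant matrix $Q_+$, so $V_X(g)=\range Q_+$ and $d_X(g)=n_+(Q)$. Nothing here is a real obstacle. The only point needing care is the dimension-counting intersection in the lower bound, and Proposition~\ref{prop:lower} already handles it.
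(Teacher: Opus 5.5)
Your proposal is correct and follows essentially the same route as the paper. The lower bound comes from the fact that $\ker B$ is a subspace on which $Q$ is nonpositive, which your direct intersection argument makes explicit, and the upper bound comes from the feasibility of $B=Q_+$ via $Q_+-Q=Q_-\succeq0$; your explicit check that $g-h=f$ and $d_X(g)=\rank Q_+=n_+(Q)$ fills in a step the paper leaves implicit.
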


\begin{proof}
If $B\in\Bcal(f;X)$ and $w\in\ker B$, then $w^\top Qw\le0$.  Therefore
$\dim\ker B\le n-n_+(Q)$ and $\rank B\ge n_+(Q)$.  The choice $B=Q_+$ is
feasible because $Q_+-Q=Q_-\succeq0$.
\end{proof}

\end{document}